\theoremstyle{plain}
\newtheorem{thm}{Theorem}[section]
\newtheorem{lem}[thm]{Lemma}
\theoremstyle{definition}
\newtheorem{defn}[thm]{Definition}
\newtheorem{exmp}[thm]{Example}
\newcommand{\thmref}[1]{Theorem~\ref{#1}}
\newcommand{\lemref}[1]{Lemma~\ref{#1}}
\theoremstyle{remark}
\newcommand*{\Z}{\mathbb{Z}}
\renewcommand*{\a}{\alpha}
\renewcommand*{\b}{\beta}
\renewcommand*{\l}{\lambda}
\renewcommand*{\t}{\tau}
\newcommand*{\z}{\zeta}
\newcommand*{\De}{\Delta}
\newcommand*{\Ga}{\Gamma}
\newcommand*{\im}{\operatorname{Im}}
\newcommand*{\err}{\operatorname{erf}}
\newcommand*{\vt}[1]{\left\lvert #1 \right\rvert}
\newcommand*{\pangle}[1]{\left\langle #1\right\rangle}
\newcommand*{\lp}{\left(}
\newcommand*{\rp}{\right)}
\newcommand*{\bsm}{\left(\begin{smallmatrix}}
	\newcommand*{\esm}{\end{smallmatrix}\right)}
\newcommand*{\bpm}{\begin{pmatrix}}	
	\newcommand*{\epm}{\end{pmatrix}}
\newcommand*{\abcd}{\begin{pmatrix}a&b\\c&d\end{pmatrix}}
\begin{document}
	
	\title[Construction of Jacobi forms using adjoint Jacobi-Serre derivative]
	{Construction of Jacobi forms using adjoint of  Jacobi-Serre derivative}

	\author{Mrityunjoy Charan and Lalit Vaishya }
	\address{(Mrityunjoy Charan) The Institute of Mathematical Sciences,
		CIT Campus, Taramani,
		Chennai - 600 113,
		Tamil Nadu, 
		India.}
	\email{mcharan@imsc.res.in}
	\address{(Lalit vaishya) The Institute of Mathematical Sciences,
		CIT Campus, Taramani,
		Chennai - 600 113,
		Tamil Nadu,
		India.}
	\email{lalitvaishya@gmail.com, lalitv@imsc.res.in}

	\subjclass[2020]{Primary 11F50 ,  11F25 Secondary 11F30}
	\keywords{Jacobi form, Modular form, Poincare series, Adjoint map }
	
	\date{\today}
	
	\maketitle

	\begin{abstract}
	In the article, we study the Oberdieck derivative defined on the space of weak Jacobi forms. We prove that the Oberdieck derivative maps a Jacobi form to a Jacobi form. Moreover, we study the adjoint of Oberdieck derivative of a Jacobi cusp form with respect to Petersson scalar product defined on the space of Jacobi forms. As a consequence, we also obtain the adjoint of Jacobi-Serre derivative (defined in an unpublished work of Oberdieck).  As an application, we obtain certain relations among the Fourier coefficients of Jacobi forms.

	\end{abstract}
	
	\section{Introduction}
Modular forms are prominent  objects in number theory and it has numerous applications in other branches of Mathematics as well as in Physics. Construction of a  modular form is one of the fascinating  conundrum in the theory of modular forms. It is well-known that the derivative of  a classical modular form is not necessarily a classical modular form, but a certain linear combination of derivative allows one to construct classical modular form. Rankin-Cohen brackets and Serre derivative are one of those approaches.  

Using the existence of the  adjoint of  a linear map and properties of Poincar\'e series, Kohnen \cite{kohnen} first constructed certain classical modular forms with the property that its Fourier coefficients  involve special values of certain Dirichlet series. In particular,  Kohnen constructed the adjoint map  of the product map by a fixed holomorphic cusp form with respect to the usual Petersson scalar product.  Later on, following the idea of Kohnen \cite{kohnen}, the construction of holomorphic modular form has been carried out using Rankin-Cohen brackets  by Herrero \cite{herrero},  and Serre derivative by Kumar \cite{Arvind2017}. 

Jacobi forms are a natural generalization of classical (holomorphic) modular forms to several variable case. Jacobi forms were first studied systematically by Eichler and Zagier in \cite{EZ1985}. Jacobi forms are  key objects in building a bridge between classical modular forms and Siegel modular forms. This insight leads to prove a famous Saito-Kurokawa conjecture. In the case of Jacobi forms,  Choie et al \cite{ckk} and Sakata \cite{sakata} constructed Jacobi forms using the idea of  Kohnen \cite{kohnen}.
There are many work in this direction which allows to construct Jacobi forms. For details, we refer \cite{{choie1}, {a-s}, {r-s}, {zag}}.

 In 2014, Oberdieck \cite{OB2014} constructed the  Jacobi-Serre derivative on the space of Jacobi forms, which is an obvious  generalization of Serre derivative  on the space of modular forms. Analogous to Ramanujan derivative equation associated to classical Eisenstien series, he gave Ramanujan equation for Jacobi forms of  index $1$  using   Jacobi-Serre derivative. Following the work of Oberdieck \cite{OB2014}, Choie et al \cite{CDMR2021} constructed a derivative on the space of weak Jacobi forms and proved that it maps a weak Jacobi form to a weak Jacobi form.

 In this article, we analyse Oberdeick derivative and prove that it maps a Jacobi form to a Jacobi form. Moreover, we also observe that the Jacobi-Serre derivative can be expressed in terms of the Heat operator and the  Oberdieck derivative. Furthermore, we obtain the adjoint of Oberdeick derivative map.
 As a consequence, we compute the adjoint of Jacobi-Serre derivative map.   As an application, we also obtain certain relations among the Fourier coefficients of Jacobi forms.

The structure of article is as follows. In Section $2$, we briefly recall the notion of Jacobi forms and define various derivatives acting on the space of Jacobi forms and state our main result. In section $3$, we talk about adjoint of these operators and state the important results. In section $4$, We prove certain lemma which are key ingredient to prove our result. Finally, in Section $5$, we prove our results and give certain application.

 \section{Preliminaries}
 The Jacobi group $\Gamma^J:=SL_2(\mathbb{Z}) \ltimes (\mathbb{Z} \times \mathbb{Z})$ acts on $\mathbb{H} \times \mathbb C$ by the following action: 
$$
\left( \left(\begin{array}{cc}
a & b\\
c & d\\
\end{array}\right), (\lambda, \mu) \right)\cdot(\tau, z)=
\left( \frac {a\tau+b}{c\tau+d}, \frac{z+\lambda \tau+ \mu}{c\tau+d} \right).
$$
Let $k, m$ be fixed positive integers. For a complex-valued function $\phi$ on $ \mathbb{H} \times \mathbb C$ and \linebreak $\gamma= \left( \left(\begin{array}{cc}
	a & b\\
	c & d\\
	\end{array}\right), (\lambda, \mu) \right) \in \Gamma^J,$ we define the stoke operator $|_{k, m}$ given by  
	
	$$
	\left(\phi|_{k, m} \gamma\right)(\tau,z):= (c\tau+d)^{-k} e^{2 \pi i m\small(-\frac{c(z+\lambda \tau+\mu)^2)}{c\tau+d}+ \lambda^2\tau+2\lambda z)} \phi (\gamma\cdot (\tau, z)).
	$$
	
	\begin{defn}
		A Jacobi form of weight $k$ and index $m$ on $\Gamma^J$  is a holomorphic function $\phi: \mathbb{H} \times \mathbb C \rightarrow \mathbb C$ satisfying the followings:
		$$\phi|_{k, m} \gamma=\phi, ~ \forall \gamma \in \Gamma^J,$$ 
		and having a Fourier expansion of the form 
		$$
		\phi(\tau, z)= \sum_{\substack{n \ge 1, r \in \mathbb{Z},\\ 4nm-r^{2}\ge 0}} c(n, r) q^n \zeta^r, 
		$$
		where $q=e^{2 \pi i \tau}$ and $\zeta=e^{2 \pi i z}$.
		Further, we say that   $\phi$ is a cusp form if $$c(n, r) \neq 0 \implies 4nm-r^{2}>0.$$ 
	\end{defn}
	\noindent We denote the space of all Jacobi forms of weight $k$ and index $m$ and the subspace of all Jacobi cusp forms of weight $k$ and index $m$ by $J_{k, m}$ and $J_{k, m}^{cusp}$, respectively. The Petersson scalar product $\langle,  \rangle$ on the space  $J_{k, m}^{cusp}$ is given as follows.  For  $\phi,~\psi\in J_{k, m}^{cusp}$, 
	$$
	\langle \phi, \psi \rangle=\int\limits_{\Gamma^J\setminus \mathbb{H}\times \mathbb{C}} \phi(\tau,z)\overline{\psi(\tau, z)}  v^k e^{\frac{-4\pi m y^2}{v}}dV_J,
	$$
	where $dV_J=\dfrac{dudvdxdy}{v^3}$ is an invariant measure under the action of $\Gamma^J$ on $\mathbb{H} \times \mathbb C$ with $\tau=u +i v, z=x+iy$ and 
	${\Gamma^J}\setminus \mathbb{H}\times \mathbb{C}$ is a fundamental domain for the action of $\Gamma^J$ on $\mathbb{H} \times \mathbb C$.
	The space $(J_{k, m}^{cusp}, \langle, \rangle)$ is a finite dimensional Hilbert space. 
	
	\begin{exmp}
		Let $k\geq 4$ be an even integer. The Jacobi Eisenstein series of weight $k$ and index $m$ is defined as 
		\begin{equation*}
		E_{k,m}(\tau, z)=\frac{1}{2}\sum_{\substack{c,d \in \mathbb{Z}\\(c,d)=1}}\sum_{\lambda \in \mathbb{Z}}(c\tau +d)^{-k}e^m \left(\lambda^2\frac{a\tau +b}{c\tau +d}+2\lambda \frac{z}{c\tau +d}-\frac{cz^2}{c\tau +d}\right),
		\end{equation*}
		where $a$ and $b$ are integers such that $\left(\begin{array}{cc}
		a & b\\
		c & d
		\end{array}\right)\in SL_2(\mathbb{Z})$. Then $E_{k,m}$  is a Jacobi form of weight $k$ and index $m$ for the Jacobi group  $\Gamma^J.$
	\end{exmp}
	
	\begin{exmp}
		Let $m, n$ and $r$ be fixed integers with $4mn-r^{2}>0$. The $(n, r)$-th Jacobi-Poincar{\'e} series of weight $k$ and index $m$ is defined as
		\begin{equation}\label{jacobi-poincare}
		P_{k, m; (n, r)} (\tau, z):=\sum_{\gamma \in \Gamma^J_{\infty}\setminus \Gamma^J} e^{2 \pi i (n \tau+ rz)}|_{k, m} \gamma .
		\end{equation}
		Here $\Gamma^J_{\infty}:=\left\{ \left( \left(\begin{array}{cc}
		1 & t\\
		0 & 1\\
		\end{array}\right), (0, \mu) \right) :~ t, \mu \in \mathbb{Z}  \right\}.$ 
		It is well-known that the $(n, r)$-th Jacobi-Poincar{\'e} series $P_{k, m; (n, r)} \in J_{k, m}^{cusp}$ for $k >2$. 
	\end{exmp}
	\noindent This Poincar{\'e} series $P_{k, m; (n, r)}$ has the following property.
	
	\begin{lem}\label{jacobi-poincare-lemma}
		Let $\phi \in J_{k, m}^{cusp}$ with Fourier expansion 
		$$\phi(\tau, z)=\sum_{\substack{n \ge 1, r \in \mathbb{ Z},\\ 4nm -r^{2}>0}} c(n, r) q^n \zeta^r.$$ Then 
		\begin{equation*}
		\langle \phi,  P_{k, m; (n, r)} \rangle= \alpha_{k, m} (4mn-r^2)^{-k+\frac{3}{2}} c(n, r),
		\end{equation*}
		where $$\alpha_{k, m}=\dfrac{m^{k-2} \Gamma(k-\frac{3}{2})} {2\pi^{k-\frac{3}{2}}}.$$
	\end{lem}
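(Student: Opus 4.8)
The plan is the classical unfolding argument for Poincaré series. Write $e_{n,r}(\tau,z):=e^{2\pi i(n\tau+rz)}$, so that $P_{k,m;(n,r)}=\sum_{\gamma\in\Gamma^J_\infty\setminus\Gamma^J}e_{n,r}|_{k,m}\gamma$. Starting from
$$\langle \phi, P_{k,m;(n,r)}\rangle = \int_{\Gamma^J\setminus\mathbb{H}\times\mathbb{C}} \phi(\tau,z)\,\overline{P_{k,m;(n,r)}(\tau,z)}\, v^k e^{\frac{-4\pi m y^2}{v}}\,dV_J,$$
I would insert this expansion and interchange summation and integration, which is legitimate since the Jacobi--Poincaré series converges absolutely for $k>2$. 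Writing the slash operator as $(\psi|_{k,m}\gamma)(\tau,z) = j_{k,m}(\gamma,(\tau,z))^{-1}\psi(\gamma\cdot(\tau,z))$ for an automorphy factor $j_{k,m}$, two facts make the unfolding work: first, $\phi|_{k,m}\gamma=\phi$ gives $\phi(\tau,z)\overline{(e_{n,r}|_{k,m}\gamma)(\tau,z)} = |j_{k,m}(\gamma,(\tau,z))|^{-2}\,\phi(\gamma\cdot(\tau,z))\,\overline{e_{n,r}(\gamma\cdot(\tau,z))}$; and second, the weight function in the Petersson product is designed precisely so that $|j_{k,m}(\gamma,(\tau,z))|^{-2}\,v^k e^{-4\pi m y^2/v}$ equals the value of $(\im\tau')^k e^{-4\pi m(\im z')^2/\im\tau'}$ at $(\tau',z')=\gamma\cdot(\tau,z)$. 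Combined with the $\Gamma^J$-invariance of $dV_J$, each summand's integral over $\Gamma^J\setminus\mathbb{H}\times\mathbb{C}$ equals the integral of $\phi(\tau,z)\,\overline{e_{n,r}(\tau,z)}\,v^k e^{-4\pi m y^2/v}$ over $\gamma\cdot(\Gamma^J\setminus\mathbb{H}\times\mathbb{C})$; as $\gamma$ runs over the cosets these translates tile a fundamental domain for $\Gamma^J_\infty$, so
$$\langle \phi, P_{k,m;(n,r)}\rangle = \int_{\Gamma^J_\infty\setminus\mathbb{H}\times\mathbb{C}} \phi(\tau,z)\,e^{-2\pi i(n\bar\tau+r\bar z)}\,v^k e^{\frac{-4\pi m y^2}{v}}\,dV_J.$$

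Since $\Gamma^J_\infty$ acts by $(\tau,z)\mapsto(\tau+t,z+\mu)$, I would take $\{\,0\le u\le 1,\ v>0,\ 0\le x\le 1,\ y\in\mathbb{R}\,\}$ as a fundamental domain, substitute the Fourier expansion $\phi=\sum_{n',r'}c(n',r')q^{n'}\zeta^{r'}$, and carry out the $u$- and $x$-integrations; by orthogonality of exponentials only the term $(n',r')=(n,r)$ survives, leaving
$$\langle \phi, P_{k,m;(n,r)}\rangle = c(n,r)\int_0^\infty v^{k-3} e^{-4\pi n v}\Bigl(\int_{-\infty}^\infty e^{-4\pi r y - 4\pi m y^2/v}\,dy\Bigr)dv.$$
Completing the square in the inner Gaussian integral gives $\tfrac12\sqrt{v/m}\;e^{\pi r^2 v/m}$; plugging this back and using $4\pi n - \pi r^2/m = \tfrac{\pi}{m}(4mn-r^2)$ reduces the outer integral to a Gamma integral,
$$\langle \phi, P_{k,m;(n,r)}\rangle = \frac{c(n,r)}{2\sqrt m}\int_0^\infty v^{k-5/2} e^{-\frac{\pi}{m}(4mn-r^2)v}\,dv = \frac{c(n,r)}{2\sqrt m}\,\Gamma\!\left(k-\tfrac32\right)\left(\frac{m}{\pi(4mn-r^2)}\right)^{k-3/2}.$$
Collecting the powers of $m$ and $\pi$ gives exactly $\alpha_{k,m}(4mn-r^2)^{-k+3/2}c(n,r)$.

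I expect the only genuinely delicate point to be the bookkeeping in the unfolding step: checking the invariance identity for $|j_{k,m}(\gamma,(\tau,z))|^{-2}\,v^k e^{-4\pi m y^2/v}$ (equivalently, that the $|_{k,m}$-invariant combination $\phi\,\overline{\psi}\,v^k e^{-4\pi m y^2/v}$ underlying the Petersson product is genuinely $\Gamma^J$-invariant), and verifying that the coset translates of a fundamental domain for $\Gamma^J$ tile one for $\Gamma^J_\infty$. Everything after the unfolding --- the Fourier orthogonality and the two elementary one-dimensional integrals --- is routine, and absolute convergence for $k>2$ is what legitimizes all the interchanges.
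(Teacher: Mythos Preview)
Your proposal is correct and follows the standard unfolding argument. The paper does not actually supply a proof of this lemma---it is stated in the preliminaries as a known property of Jacobi--Poincar\'e series (see Eichler--Zagier)---so there is no ``paper's proof'' to compare against. That said, the exact computation you carry out (integrate in $u$ and $x$ to isolate one Fourier coefficient, complete the square in the Gaussian $y$-integral, then reduce to a Gamma integral in $v$) is precisely the method the paper uses in \lemref{IntegralApprox} for the analogous integral at weight $k+2$, so your approach is entirely consonant with the paper's techniques.
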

	\noindent The following lemma tells about the growth of the Fourier coefficients of a Jacobi form. 
	
	\begin{lem}[Choie, Kohnen \cite{choie5}]\label{Jacobi-convergence}
		If $\phi\in J_{k, m}$ and $k>3$ with Fourier coefficients $c(n, r),$ then 
		$$
		c(n, r) \ll |r^2-4nm|^{k-\frac{3}{2}}. 
		$$  
		Moreover, if $\phi$ is a Jacobi cusp form, then
		$$
		c(n, r) \ll |r^2-4nm|^{\frac{k}{2}-\frac{1}{2}}.
		$$  
	\end{lem}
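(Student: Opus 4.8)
The plan is the classical one: isolate $c(n,r)$ by integrating $\phi$ over a period, and then control the size of $\phi$ near the cusp. First I would extract the coefficient. For fixed $v=\im\tau>0$ and $y=\im z\in\R$ the function $(u,x)\mapsto\phi(u+iv,x+iy)$ (with $u=\re\tau$, $x=\re z$) is smooth and $1$-periodic in $u$ and in $x$, so matching it against the Fourier expansion of $\phi$ gives, for \emph{every} $v>0$ and $y\in\R$,
\[
c(n,r)\,e^{-2\pi(nv+ry)}=\int_0^1\!\!\int_0^1\phi(u+iv,x+iy)\,e^{-2\pi i(nu+rx)}\,du\,dx ,
\qquad\text{hence}\qquad
|c(n,r)|\le e^{2\pi(nv+ry)}\!\int_0^1\!\!\int_0^1\!\bigl|\phi(u+iv,x+iy)\bigr|\,du\,dx .
\]

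The key input is then a bound for $|\phi|$. The function $v^{k/2}e^{-2\pi m y^2/v}|\phi(\tau,z)|$ is $\Gamma^J$-invariant (it is the square root of the integrand of the Petersson product), so for $\phi$ a \emph{cusp} form it vanishes at the cusp and is therefore bounded on $\H\times\C$, giving $|\phi(\tau,z)|\ll v^{-k/2}e^{2\pi m y^2/v}$. Substituting this, choosing $y=-rv/(2m)$ so that $nv+ry+my^2/v=\tfrac{v}{4m}(4mn-r^2)$, and minimising the resulting $v^{-k/2}e^{\pi v(4mn-r^2)/(2m)}$ over $v>0$ (the minimum is at $v\asymp(4mn-r^2)^{-1}$) already yields $|c(n,r)|\ll(4mn-r^2)^{k/2}$. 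To gain the extra half-power and reach $k/2-\tfrac12$ I would refine this in one of two ways: via the theta decomposition $\phi=\sum_{\mu\bmod 2m}h_\mu\theta_{m,\mu}$, reducing to a Hecke-type estimate for the half-integral-weight cusp forms $h_\mu$ (whose Fourier coefficients are exactly the $c(n,r)$); or via \lemref{jacobi-poincare-lemma}, which gives $c(n,r)=\alpha_{k,m}^{-1}(4mn-r^2)^{k-3/2}\langle\phi,P_{k,m;(n,r)}\rangle$, so that Cauchy--Schwarz together with $\|P_{k,m;(n,r)}\|^2=\alpha_{k,m}(4mn-r^2)^{-k+3/2}g$ (where $g$ is the $(n,r)$-th Fourier coefficient of $P_{k,m;(n,r)}$) reduces the estimate to a bound $g\ll(4mn-r^2)^{1/2}$ for that diagonal coefficient, i.e.\ to Weil-type bounds for the Jacobi--Kloosterman sums occurring in it.

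For a general Jacobi form the invariant function no longer vanishes at the cusp: reducing $(\tau,z)$ to a fundamental domain and using that $\phi\to c(0,0)$ there, one only gets $v^{k/2}e^{-2\pi m y^2/v}|\phi|\ll 1+(\mathrm{ht}\,\tau)^{k/2}$, and since $\mathrm{ht}(u+iv)\le\max(v,1/v)$ this yields $|\phi(u+iv,x+iy)|\ll(1+v^{-k})e^{2\pi m y^2/v}$. Feeding this pointwise bound into the integral and optimising only gives $(4mn-r^2)^{k}$, so the heart of the matter is to use the \emph{average} of $|\phi|$ over $(u,x)$ rather than its supremum: $|\phi(u+iv,\cdot)|$ is large only for $u$ near a rational of small denominator (where $\tau$ approaches a cusp) and for $x$ near the support in the elliptic variable of the index-$m$ theta functions, and summing these contributions (here $k>3$ enters, for convergence of the resulting series over denominators) gives $\int_0^1\!\int_0^1|\phi(u+iv,x+iy)|\,du\,dx\ll v^{-(k-3/2)}e^{2\pi m y^2/v}$ for $0<v<1$; substituting this, choosing $y$ as before and optimising over $v$ then produces $|c(n,r)|\ll(4mn-r^2)^{k-3/2}$.

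I expect the averaging step in the non-cuspidal case to be the main obstacle: a pointwise bound for $|\phi|$ is off by a factor $(4mn-r^2)^{3/2}$, and closing this gap requires understanding precisely where $|\phi|$ concentrates on the boundary of $\H\times\C$ — the size of $\phi$ in all cusp neighbourhoods of $\Gamma^J\backslash(\H\times\C)$, not just at $\infty$, together with its theta-like concentration in the elliptic variable. In the cuspidal case the analogous difficulty is the Kloosterman-sum estimate for the Poincar\'e-series norm, equivalently the Hecke-type bound for the forms $h_\mu$.
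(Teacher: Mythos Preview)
The paper does not give its own proof of this lemma: it is simply quoted from Choie--Kohnen \cite{choie5} as a known result, so there is no argument in the paper to compare yours against.

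As for the content of your sketch: the overall strategy (extract $c(n,r)$ by integrating over a period box and then bound $|\phi|$ via the $\Gamma^J$-invariant function $v^{k/2}e^{-2\pi m y^2/v}|\phi|$) is correct and is the standard route.  Your first refinement for the cuspidal case --- pass through the theta decomposition $\phi=\sum_{\mu}h_\mu\theta_{m,\mu}$ and invoke the Hecke bound for the half-integral-weight cusp forms $h_\mu$ --- is precisely what Choie and Kohnen do.  Your alternative via Poincar\'e series and Cauchy--Schwarz also works in principle, though reducing it to ``$g\ll(4mn-r^2)^{1/2}$'' for the diagonal coefficient is essentially the same difficulty repackaged.

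Where your plan is heavier than necessary is the non-cuspidal case.  The averaging argument over $(u,x)$ you outline (locating where $|\phi|$ concentrates near cusps and in the elliptic variable, then summing over denominators) is not needed: the theta decomposition works equally well here, reducing the estimate to the classical bound $a(n)\ll n^{\kappa-1}$ for Fourier coefficients of a weight-$\kappa$ (half-integral) modular form that is not a cusp form.  Equivalently, one may subtract off the Eisenstein part of $\phi$, for which the coefficients are known explicitly and satisfy the stated bound, leaving a cusp form already handled.  Your proposed averaging would work but is a substantial detour.
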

	\noindent
	For more details on the theory of Jacobi forms, we refer to \cite{EZ1985}.

	\subsection{Differentials on Jacobi form} 
	Let $D_{\tau}:= \frac{1}{2 \pi i} \frac{\partial}{\partial \tau}$ and $D_{z}:= \frac{1}{2 \pi i} \frac{\partial}{\partial z}$ denote the partial derivative operators. For a matrix $\gamma = \left(\begin{array}{cc}
	a & b\\
	c & d\\
	\end{array}\right) \in SL_{2}(\mathbb{Z})$, let us write $\chi(\gamma) = \frac{c}{c \tau +d}$. Then, for any $F \in \mathbb{C}[[q, \zeta, \zeta^{-1}]]$, we have the following transformation properties for  $D_{\tau} F$ and $D_{z}F$ given by
	\begin{equation*}\label{DTAFT}
	D_{\tau} (F|_{k,m} \gamma) = \left( m \chi(\gamma)^{2} - \frac{k}{2 \pi i} \chi(\gamma)  \right)( F|_{k,m} \gamma) - \chi(\gamma) (D_{z} F)_{|_{k+1,m}} \gamma + D_{\tau} (F)|_{k+2,m} \gamma,
	\end{equation*}
	and
	\begin{equation*}\label{DZFT}
	D_{z} (F|_{k,m} \gamma) = -2m ( F|_{k,m} \gamma) + (D_{z}F)_{|_{k+1,m}} \gamma.
	\end{equation*}
	From the above transformation properties, the derivatives  $D_{\tau}F$ and $D_{z} F$ are no longer Jacobi forms. Still, one can expect that a certain combination of these derivatives leads to construct a Jacobi form. Below, we produce the details of differential operators that lead to construct a Jacobi form.

	\smallskip
	\noindent
	\textbf{Heat operator on Jacobi form:} 
	For an integer $m,$ Choie \cite{choie11} has defined  the heat operator $\mathcal{L}_{m}$ given by  
	$$
	\mathcal{L}_{m}:=\frac{1}{(2\pi i)^2}\left(8 \pi im \frac{\partial}{\partial \tau}-\frac{\partial^2}{\partial z^2}\right) = 4mD_{\tau}-D_{z}^{2}. 
	$$
	The operator $\mathcal{L}_{m}$ is natural in the context of Jacobi form because it acts on monomials $q^{\ell}\zeta^{w}$ by multiplication by $(4 \ell m-w^{2} )$, i.e., $\mathcal{L}_{m} (q^{\ell}\zeta^{w}) = (4 \ell m-w^{2} ) q^{\ell}\zeta^{w} $, and hence it preserves the actions of lattices.  
	It is known that this operator satisfies the following functional equations; for any  $\gamma = \left(\begin{array}{cc}
	a & b\\
	c & d\\
	\end{array}\right) \in SL_{2}(\mathbb{Z})$ and any $X \in \mathbb{Z}^{2}$,
	\begin{equation}\label{Heat-trans}
	\begin{split}
	\mathcal{L}_{m}(\phi)|_{k+2, m} \gamma &= \mathcal{L}_{m}(\phi|_{k, m} \gamma) + \frac{2m(2k-1)}{2 \pi i} \frac{c}{c \tau+d} (\phi|_{k, m} \gamma),  \\
	\mathcal{L}_{m}(\phi)|_{m} X &= \mathcal{L}_{m}(\phi|_{ m} X).
	\end{split}
	\end{equation}
	From \eqref{Heat-trans}, it is clear that the heat operator does not map a Jacobi form to a Jacobi form. But, if we consider the operator $\mathcal{L}_{k,m} := \mathcal{L}_{m}- \frac{m(2k-1)}{6}E_{2}$, then it is easy to see that it  maps Jacobi form of weight $k$ index $m$ to  a Jacobi form of weight $k+2$ index $m$, i.e.,
	\begin{equation*}
	\begin{split}
	\left( \mathcal{L}_{m}- \frac{m(2k-1)}{6}E_{2} \right)\phi|_{k+2, m} \gamma &= \left( \mathcal{L}_{m}- \frac{m(2k-1)}{6}E_{2} \right) \phi \;\;\; {\rm for ~ all ~}  \phi \in J_{k, m}.
	\end{split}
	\end{equation*}
	More precisely,  we obtain a  linear differential operator
	\begin{equation*}\label{LDO1}
	\begin{split}
	\mathcal{L}_{k,m}: J_{k,m}^{cusp} \longrightarrow J_{k+2,m}^{cusp} \;  \quad \text{given by} \quad  \phi \mapsto \left( \mathcal{L}_{m}- \frac{m(2k-1)}{6}E_{2} \right)\phi.
	\end{split}
	\end{equation*}
	
	\noindent
	\textbf{Deformed Eisenstein series and Oberdieck derivation:}
	Let $B_{n}$ denote the $n$-th Bernoulli numbers defined as; $\frac{t}{e^{t}-1} = \displaystyle{\sum_{n \ge 1}} B_{n} \frac{t^{n}}{n!}$. In particular, $B_{0}=1, B_{1}= -\frac{1}{2}, B_{2} = \frac{1}{6}, \cdots$. Oberdieck \cite{OB2014} defined the deformed or twisted Eisenstein series $J_{n}(\tau, z)$ (for $n \ge 0$) given by
	\begin{equation}\label{TwistESeries}
	\begin{split}
	J_{n}(\tau, z) = \delta_{n,1}\frac{\z}{\z-1} + B_{n}- n \sum_{k,r \ge 1} r^{n-1} (\zeta^{k}+ (-1)^{n} \zeta^{-k}) q^{kr}.
	\end{split}
	\end{equation}
	The name of $J_{n}$ reminds us of the fact that they restricted to the classical Eisenstein series $E_{2k}$ at $z=0$, i.e., 
	$$
	J_{2k}(\tau,0)= B_{2k} E_{2k} \quad {\rm and} \quad J_{2k+1}(\tau,0) \equiv 0.
	$$ 
 The deformed Eisenstein series $J_{1}(\tau, z)$ arises as a logarithmic derivative (with respect to $z$) of Jacobi theta function $\theta_{1}(\tau,z)$ and $J_{n}(\tau, z)$ can be given using $\theta_{1}(\tau,z)$ and its derivatives (with respect to $z$). The deformed Eisenstein series $J_{n}(\tau, z)$ transforms like a Jacobi form of weight $n$ and index $0$ with certain additional lower terms.  The function $J_{1}(\tau, z)$ and $J_{2}(\tau, z)$ satisfies the following transformation properties,  respectively.
	\begin{equation}\label{J1Transf}
	\begin{split}
	\forall (\lambda, \mu) \in \mathbb{Z}, \quad  &  {J_{1}}|_{0 }(\lambda,\mu) = J_{1} -\lambda, \\
	\forall \gamma \in SL_{2}(\mathbb{Z}), \quad & {J_{1}}|_{1,0 } \gamma  = J_{1} + \chi(\gamma),  
	\end{split}
	\end{equation}
	and 
	\begin{equation}\label{J2Transf}
	\begin{split}
	\forall (\lambda, \mu) \in \mathbb{Z}, \quad  &  {J_{2}}|_{0 }(\lambda,\mu) = J_{2} - 2 \lambda J_{1} +{\lambda}^{2}, \\
	\forall \gamma \in SL_{2}(\mathbb{Z}), \quad & {J_{2}}|_{2,0 } \gamma  = J_{2} + 2 \chi(\gamma) J_{1} + \chi(\gamma)^{2},  
	\end{split}
	\end{equation}
	where $\chi(\gamma)$ is a function from $\mathbb{H} \times \mathbb{C}$ to $\mathbb{C}$ given by 
	$$
	\chi(\gamma)(\tau, z) = \frac{c z}{ c\tau +d}.
	$$
	For details, we refers to \cite[Section 1]{OB2014}  and  \cite[Section 2]{CDMR2021}.

A weak Jacobi form of weight $k$ and index $m$ is a holomorphic function $\Phi : \mathbb{H} \times \mathbb{C} \longrightarrow \mathbb{C}$  invariant
by the action of  the stoke operator $|_{k, m}$  of the Jacobi group $\Gamma^{J}$, and whose Fourier expansion is  given by
$$
\Phi(\tau, z)= \sum_{\substack{n \ge 1, r \in \mathbb{Z},\\ r^{2} \le 4nm+ m^{2}}} c(n, r) q^n \zeta^r.
$$
Let $\tilde{J}_{k,m}^{cusp}$ denote the $\mathbb{C}$-space of weak Jacobi forms. The  space $\tilde{J}_{k,m}^{cusp}$ of such functions is finite dimensional \cite{EZ1985}. Moreover, The space of  Jacobi forms $J_{k,m}^{cusp}$ is a proper subspace of the space of weak Jacobi forms $\tilde{J}_{k,m}^{cusp}$. Now, we define the notion of Oberdieck derivative defined on the space of weak Jacobi form $\tilde{J}_{k,m}^{cusp}$.
	
	\noindent
	\textbf{Oberdieck derivative:}  For a given $\phi \in \tilde{J}_{k,m}^{cusp} $, using the deformed Eisenstein series,  Choie et al \cite{CDMR2021} defined the notion of  Oberdieck derivative $\mathcal{O} \phi$  associated to  $\phi$ given by
	\begin{equation}\label{OBDerivative}
	\mathcal{O}\phi = D_{\tau} \phi - \frac{k}{12} E_{2} \phi - J_{1} D_{z} \phi + m J_{2} \phi.
	\end{equation}
	From  \cite[Section 4.3]{CDMR2021}, it is well-known that $\mathcal{O}$ maps $\tilde{J}_{k,m}^{cusp}$ to $\tilde{J}_{k+2,m}^{cusp}$ given by 
	$$\phi \mapsto  \left( D_{\tau}  - \frac{k}{12} E_{2}  - J_{1} D_{z}  + m J_{2} \right)  \phi. $$

	\noindent
	\textbf{Serre derivative on Jacobi form:}
	Let $f \in M_{k}(SL_{2}(\mathbb{Z}))$ be a classical modular form. The derivative $D_{\tau}f$ fails to be a modular form. To make it a modular form, one needs to add a multiple of a quasi-modular form (more precisely, Eisenstein series $E_{2}$). In particular, one can get a linear differential operator (namely, Serre derivative) which maps a modular form of weight $k$ to a modular form of weight $k+2$. The Serre derivative is defined as follows:
	$$
	{\partial}^{S} :M_{k}(SL_{2}(\mathbb{Z}))  \rightarrow M_{k}(SL_{2}(\mathbb{Z})), \qquad   {\partial}^{S}f := D_{\tau}f - \frac{k}{12} E_{2} f.  
	$$
	Finite dimensionality of the vector space $M_{k}(SL_{2}(\mathbb{Z}))$ allows one to get the following  Ramanujan equations for the Eisenstein series $E_{2}$, $E_{4}$ and $E_{6}$:
	\begin{equation}\label{Ram-Eq}
	{\partial}^{S}E_{2} + \frac{1}{12} E_{2}^{2} = \frac{1}{12} E_{4}, \quad  {\partial}^{S}E_{4}  = - \frac{1}{3} E_{6}, \quad {\rm and } \quad {\partial}^{S}E_{6} = - \frac{1}{2} E_{4}^{2}.
	\end{equation}

	Using the same strategy, one can define an analogue of the Serre derivative (for classical modular form) on the space of the Jacobi form of even weight and any index.  Oberdieck  \cite{OB2014} defined the Jacobi-Serre derivative ${\partial}^{J}$ on the space of Jacobi form of even weight and any index and it is given by  
	\begin{equation}\label{Jacobi-Serre-D}
	{\partial}^{J} (\phi)   = D_{\tau} \phi - \frac{k}{12} E_{2}~ \phi + \frac{1}{1-4m} \left(  D_{z}^{2} \phi -J_{1}  D_{z} \phi + m J_{2} ~ \phi - \frac{m}{6} E_{2}~ \phi   \right).
	\end{equation}
	Moreover, he proved the following result. 
	\begin{thm}\label{SerreDThm}\cite[Theorem 1]{OB2014}
		For all non-negative integers $k,m$, there is a differential operator 
		$$
		{\partial}^{J} :J_{2k,m} \longrightarrow J_{2k+2,m}
		$$
		such that for every $\phi(\tau,z) \in J_{2k,m}$, we have 
		$$
		({\partial}^{J}\phi) (\tau,0) = ({\partial}^{S}) \phi(\tau,0). 
		$$
	\end{thm}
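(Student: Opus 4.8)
The plan is to recognize the Jacobi--Serre derivative ${\partial}^{J}$ as an explicit linear combination of two operators on Jacobi forms that are already under control: the Oberdieck derivative $\mathcal{O}$ of \eqref{OBDerivative} and the corrected heat operator $\mathcal{L}_{\kappa,m}:=\mathcal{L}_{m}-\tfrac{m(2\kappa-1)}{6}E_{2}$. Fix $\phi\in J_{2k,m}$ and write $\kappa:=2k$ for its weight, so that \eqref{Jacobi-Serre-D} reads ${\partial}^{J}\phi=D_{\tau}\phi-\tfrac{\kappa}{12}E_{2}\phi+\tfrac{1}{1-4m}\bigl(D_{z}^{2}\phi-J_{1}D_{z}\phi+mJ_{2}\phi-\tfrac{m}{6}E_{2}\phi\bigr)$. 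Starting from this together with \eqref{OBDerivative} and $\mathcal{L}_{m}=4mD_{\tau}-D_{z}^{2}$, I would first collect terms and check the algebraic identity
\[
(1-4m)\,{\partial}^{J}\phi \;=\; \mathcal{O}\phi \;-\; \mathcal{L}_{\kappa,m}\phi;
\]
this is purely formal — the $D_{\tau}\phi$, $D_{z}^{2}\phi$, $J_{1}D_{z}\phi$ and $mJ_{2}\phi$ contributions match termwise, and the only arithmetic is that the coefficient of $E_{2}\phi$ equals $\tfrac{-\kappa+4m\kappa-2m}{12}$ on both sides. Since $1-4m\neq0$ for every integer $m$, we may divide and obtain ${\partial}^{J}\phi=\tfrac{1}{1-4m}(\mathcal{O}\phi-\mathcal{L}_{\kappa,m}\phi)$.

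To conclude that ${\partial}^{J}$ sends $J_{2k,m}$ into $J_{2k+2,m}$ I would then invoke that $\mathcal{L}_{\kappa,m}$ maps $J_{\kappa,m}$ to $J_{\kappa+2,m}$ (recalled in Section~2) and that $\mathcal{O}$ maps $J_{\kappa,m}$ to $J_{\kappa+2,m}$ (our main theorem on the Oberdieck derivative); since $J_{\kappa+2,m}$ is a $\mathbb{C}$-vector space the identity above forces ${\partial}^{J}\phi\in J_{2k+2,m}$. Should one wish to avoid appealing to the main theorem, the weak-Jacobi statement of \cite{CDMR2021} for $\mathcal{O}$ already places ${\partial}^{J}\phi$ among weak Jacobi forms of weight $\kappa+2$, and one finishes by checking on the $q,\zeta$-expansions — using that $\mathcal{L}_{m}$ multiplies $q^{n}\zeta^{r}$ by $4nm-r^{2}$ while $E_{2},J_{1},J_{2}$ only raise the $q$-exponent — that the Fourier support of ${\partial}^{J}\phi$ stays inside the region $4nm\ge r^{2}$.

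For the second assertion, $({\partial}^{J}\phi)(\tau,0)={\partial}^{S}(\phi(\tau,0))$, I would evaluate \eqref{Jacobi-Serre-D} at $z=0$ term by term. Since $\kappa$ is even one has $\phi(\tau,-z)=\phi(\tau,z)$, so $\phi$ is even in $z$ and $(D_{z}\phi)(\tau,0)=0$. The single delicate point is the term $J_{1}D_{z}\phi$: by \eqref{TwistESeries} the function $J_{1}$ is odd in $z$ with Laurent expansion $J_{1}(\tau,z)=\tfrac{1}{2\pi i z}+O(z)$ about $z=0$ (its constant term vanishes and the $q$-terms contribute nothing at $z=0$), whereas $D_{z}\phi$ has a first-order zero there; multiplying the two expansions gives the finite value $(J_{1}D_{z}\phi)(\tau,0)=(D_{z}^{2}\phi)(\tau,0)$. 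Together with $J_{2}(\tau,0)=B_{2}E_{2}=\tfrac16 E_{2}$ this makes the whole bracket of \eqref{Jacobi-Serre-D} vanish at $z=0$, so that $({\partial}^{J}\phi)(\tau,0)=D_{\tau}(\phi(\tau,0))-\tfrac{\kappa}{12}E_{2}(\tau)\,\phi(\tau,0)={\partial}^{S}(\phi(\tau,0))$, using that $\phi(\cdot,0)\in M_{\kappa}(SL_{2}(\mathbb{Z}))$ has weight $\kappa=2k$.

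The step I expect to be the main obstacle is the Jacobi-to-Jacobi property used in the second paragraph: one must know that $\mathcal{O}\phi-\mathcal{L}_{\kappa,m}\phi$ (equivalently ${\partial}^{J}\phi$) is genuinely holomorphic on $\mathbb{H}\times\mathbb{C}$, that is, that the simple pole of $J_{1}$ cancels against the first-order zero of $D_{z}\phi$, and that its Fourier coefficients are supported on $4nm\ge r^{2}$. With that input in hand, the theorem reduces to the one-line rearrangement of the first identity and the elementary $z=0$ evaluation.
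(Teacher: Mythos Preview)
The paper does not prove this theorem at all: it is quoted verbatim from Oberdieck \cite[Theorem~1]{OB2014} and then \emph{used as input} to derive the paper's own Theorem~2.7 on the Oberdieck derivative, via the rearrangement \eqref{Obed-D}. Your primary argument runs this implication backwards --- you invoke ``our main theorem on the Oberdieck derivative'' (that $\mathcal{O}$ preserves $J_{\kappa,m}^{cusp}$) to deduce that ${\partial}^{J}$ preserves $J_{2k,m}$ --- and that is circular in the logical structure of the present paper: the only proof of Theorem~2.7 offered here rests on \eqref{Obed-D} together with Oberdieck's \thmref{SerreDThm}.

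Your fallback route (use \cite{CDMR2021} to place $\mathcal{O}\phi$, hence ${\partial}^{J}\phi$, among \emph{weak} Jacobi forms, then check the Fourier support) is a genuine attempt at an independent proof, but the sketch you give for the support check is not right: $J_{1}$ and $J_{2}$ do not merely raise the $q$-exponent, they shift the $\zeta$-exponent as well (already the term $\tfrac{\zeta}{\zeta-1}$ in $J_{1}$ contributes all powers of $\zeta$), so the claim ``$E_{2},J_{1},J_{2}$ only raise the $q$-exponent'' is false and one cannot read off $4nm\ge r^{2}$ that cheaply. Closing this gap would require a more careful argument (e.g.\ via the theta decomposition), which is essentially what Oberdieck's original proof supplies. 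Your $z=0$ computation for the second assertion is correct and pleasant --- the key identities $(J_{1}D_{z}\phi)(\tau,0)=(D_{z}^{2}\phi)(\tau,0)$ and $J_{2}(\tau,0)=\tfrac16 E_{2}$ do make the bracket in \eqref{Jacobi-Serre-D} vanish --- but this part alone does not establish the theorem.
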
 
Analogue to the Ramanujan differential equations \eqref{Ram-Eq} associated to the classical Eisenstein series, Oberdieck established the differential equation associated to index $1$ Eisenstein series $E_{2,1}$, $E_{4,1}$ and  $E_{6,1}$   in \cite[Corollary 3]{OB2014}.      	

Let us consider the following weak Jacobi form
\begin{equation*}
	\phi_{-2,1}(\t, z)
	=\frac{\phi_{10,1}}{\De(\t)}.
\end{equation*}
 It is  one of the generators of the algebra of even-weight weak Jacobi forms \cite[Theorem 9.3]{1-2-3}. Let $\wp(\t, z)$ denote the  Weierstrasse $\wp$-function given by 
 \begin{equation*}
 	\wp(\t, z)
 	=\frac{1}{(2\pi i)^2}
 	\lp
 	\frac{1}{z^2}
 	+\sum_{n\ge 1}^{}
 	2(2n+1)
 	\zeta(2n+2)
 	E_{2n+2} z^{2n}
 	\rp
 \end{equation*}
 Oberdieck  \cite{OB2014} defined the analogue of $E_2$ for Jacobi forms of
 index $1$ as follows:
 $$E_{2,1}(\t, z)
 := \phi_{-2,1}(\t, z)
 \lp
 E_2(\t) \wp(\t,z)
 -\frac{1}{12}
 E_4
 \rp.
 $$
Although $E _{2,1}$ has several properties similar to  $E _2 (\t )$ \cite[Lemma 13]{OB2014}, the definition is rather ad-hoc. Oberdieck mentioned that  it would be interesting to find a more conceptual approach to define $E_{2,1}(\t, z)$. Moreover,  Oberdieck \cite[Corollary 3] {OB2014} stated the Ramanujan equation for  Jacobi form of  index $1$ given as follows: 
	\begin{equation}\label{eq:ramanujan eq for jacobi}
		\begin{split}
	          \partial^{J}E_{2,1}+\frac{1}{12}E_2E_{2,1}+\frac{1}{16}E_4^{'}\phi_{-2,1}
	          &=-\frac{1}{12}E_{4,1},\\
	          \partial^{J}E_{4,1}
	          &=-\frac{1}{3}E_{6,1},\\
	          \partial^{J}E_{6,1}
	          &=-\frac{1}{2}E_4E_{4,1}
		\end{split}
	\end{equation}
	where $E _{4,1}$ and $E _{6,1}$ be the Jacobi-Eisenstein series of index $1$ and weight $4$ and $6$, respectively.
These differential equations becomes  the original Ramanujan equations obtained in  \eqref{Ram-Eq} (associated to classical Eisenstein series) by substituting  $z=0$ in \eqref{eq:ramanujan eq for jacobi}.

Let us recall the operator  $\mathcal{L}_{k,m}$  given by $\mathcal{L}_{k,m}=\mathcal{L}_{m}- \frac{m(2k-1)}{6}E_{2}$.	It is easy to observe that the  Jacobi-Serre derivative ${\partial}^{J}$ can be re-written as a linear combination of  $ \mathcal{L}_{k,m}$ and Oberdieck derivative $\mathcal{O}$ defined in \eqref{OBDerivative} . More precisely,
	\begin{equation}\label{Jacobi-Serre-DSumT}
	\begin{split}
	{\partial}^{J} \phi   
	& =  \frac{1}{4m-1} \left(\mathcal{L}_{k,m} \phi - \mathcal{O} \phi \right).
	\end{split}
	\end{equation}
	Moreover, one can see that 
	\begin{equation}\label{Obed-D}
	\begin{split}
	\mathcal{O} \phi=   \mathcal{L}_{k,m} \phi - (4m-1){\partial}^{J} \phi .
	\end{split}
	\end{equation}
	Choie et al \cite{CDMR2021}  proved that  the Oberdieck derivative of  weak Jacobi forms is again  a weak Jacobi form. From \eqref{Obed-D}, we conclude that if $\phi$ is a Jacobi form of weight $k$ and index $m$ then $\mathcal{O} \phi$ is not only a weak Jacobi form but a Jacobi form of weight $k+2$ and  index $m$. In particular, we have the following result. 
	\begin{thm}
		The Oberdieck derivative $\mathcal{O}$ maps $J_{k,m}^{cusp}$ to $J_{k+2,m}^{cusp}$ given by 
		$$
		\phi \mapsto  \left(   D_{\tau}  - \frac{k}{12} E_{2}  - J_{1} D_{z}  + m J_{2} \right) \phi .
		$$
	\end{thm}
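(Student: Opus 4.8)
The transformation law, holomorphy, and the ``weak'' Fourier support of $\mathcal{O}\phi$ I would simply quote from Choie et al.\ \cite{CDMR2021}: for $\phi\in\tilde J_{k,m}^{cusp}$ they already prove $\mathcal{O}\phi\in\tilde J_{k+2,m}^{cusp}$, so $\mathcal{O}\phi$ is holomorphic on $\H\times\C$, invariant under $|_{k+2,m}$, and its Fourier expansion is supported on $\{n\ge 1,\ r^{2}\le 4nm+m^{2}\}$. Consequently the only thing that needs proof is that, when $\phi$ is a genuine Jacobi cusp form, the Fourier coefficients of $\mathcal{O}\phi$ vanish unless $4nm-r^{2}>0$; that is, one must push the support from the weak parabola down to the cuspidal one.

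The plan is to obtain this from the identity \eqref{Obed-D},
\begin{equation*}
\mathcal{O}\phi=\mathcal{L}_{k,m}\phi-(4m-1)\,{\partial}^{J}\phi ,
\end{equation*}
which has already been verified by matching the heat-operator and $E_{2}$-terms. I would argue that each summand on the right is in fact a genuine Jacobi cusp form of weight $k+2$ and index $m$. For $\mathcal{L}_{k,m}\phi$ this is the mapping property $\mathcal{L}_{k,m}\colon J_{k,m}^{cusp}\to J_{k+2,m}^{cusp}$ recorded earlier, which is transparent from $\mathcal{L}_{m}(q^{n}\zeta^{r})=(4nm-r^{2})q^{n}\zeta^{r}$ together with the fact that multiplying by the $q$-series $E_{2}$ cannot decrease $4nm-r^{2}$ along the support. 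For ${\partial}^{J}\phi$ I would invoke Oberdieck's \thmref{SerreDThm} in its cuspidal form, namely that ${\partial}^{J}$ maps $J_{k,m}^{cusp}$ into $J_{k+2,m}^{cusp}$; since $J_{k+2,m}^{cusp}$ is a linear subspace of $\tilde J_{k+2,m}^{cusp}$, the right-hand side of \eqref{Obed-D}, and hence $\mathcal{O}\phi$ itself, lies in $J_{k+2,m}^{cusp}$. The explicit shape $\mathcal{O}\phi=\bigl(D_{\tau}-\tfrac{k}{12}E_{2}-J_{1}D_{z}+mJ_{2}\bigr)\phi$ is then nothing but \eqref{OBDerivative}.

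The hard part is the cuspidality of ${\partial}^{J}\phi$, i.e.\ the vanishing of the boundary coefficients $c(n,r)$ with $4nm-r^{2}=0$ of the correction terms. I would not attempt this by a direct Fourier computation, because the individual pieces $J_{1}D_{z}\phi$ and $mJ_{2}\phi$ are not even weak Jacobi forms (the $J_{n}$ transform under $\Gamma^{J}$ only up to extra lower-order terms, cf.\ \eqref{J1Transf}, \eqref{J2Transf}), so the required cancellation is a property of the precise combination ${\partial}^{J}\phi$ and not of its summands. Instead I would either cite Oberdieck's theorem for this, or pass to the theta decomposition $\phi=\sum_{\mu}h_{\mu}\theta_{m,\mu}$ — under which $\phi$ is a cusp form precisely when the $h_{\mu}$ are — and check that ${\partial}^{J}$ acts on $(h_{\mu})_{\mu}$ as a Serre-type derivative, which preserves cuspidality. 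Once that single point is settled, the remainder is bookkeeping with \eqref{Obed-D}.
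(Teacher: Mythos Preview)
Your proposal is correct and follows exactly the paper's approach: the paper's entire argument (given in the paragraph immediately preceding the theorem) is to invoke the identity \eqref{Obed-D}, $\mathcal{O}\phi=\mathcal{L}_{k,m}\phi-(4m-1)\,\partial^{J}\phi$, and note that both $\mathcal{L}_{k,m}$ and $\partial^{J}$ send Jacobi (cusp) forms of weight $k$, index $m$ to Jacobi (cusp) forms of weight $k+2$, index $m$. You are in fact more careful than the paper about the cuspidality of $\partial^{J}\phi$, which the paper leaves implicit in its citation of Oberdieck's \thmref{SerreDThm}.
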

	
	\section{Adjoint of certain linear maps on Jacobi form}
Kumar \cite{Arvind2017}  computed the adjoint of the Serre derivative ${\partial}^{S}$ with respect to the Petersson scalar product by using the properties of nearly holomorphic modular forms. The Fourier coefficients of a cusp form, constructed in this fashion, involve special values of certain shifted Dirichlet series. 

From \eqref{Jacobi-Serre-DSumT}, we see that Jacobi-Serre derivative ${\partial}^{J}$, acting on  the space of Jacobi cusp form, can be re-written as a linear combination of weight raising operator $ \mathcal{L}_{k,m}$ and Oberdieck derivative $\mathcal{O}$, i.e.,
\begin{equation*}\label{JSD}
\begin{split}
{\partial}^{J}  & =  \frac{1}{4m-1} \left(\mathcal{L}_{k,m} - \mathcal{O} \right).
\end{split}
\end{equation*}
Let ${\partial}^{J *}$ denote the adjoint of the Jacobi-Serre derivative ${\partial}^{J}$, i.e., 
$${\partial}^{J *}: {J}_{k+2,m}^{cusp} \longrightarrow {J}_{k+2,m}^{cusp}.$$
From \eqref{JSD} and properties of adjoint ${\partial}^{J *}$, we see that 
\begin{equation*}\label{JSAdj}
\begin{split}
{\partial}^{J *} & =  \frac{1}{4m-1} \left(\mathcal{L}^{*}_{k,m} - \mathcal{O}^{*} \right).
\end{split}
\end{equation*}
Thus, it is enough to obtain the adjoints  $\mathcal{L}^{*}_{k,m}$ and   $\mathcal{O}^{*} $ to get ${\partial}^{J *}$, the adjoint of the Jacobi-Serre derivative.  For given integers $k, \ell, m$ and $w$, let 
$$\beta_{k,m,  \ell, w} = \dfrac{m^{2} \left(k-\frac{1}{2}\right) \left(k-\frac{3}{2}\right){(4\ell m-w^{2})^{k-\frac{3}{2}}}}{\pi^{2}}.$$

Let $\mathcal{L}_{k,m}^*$ denote the adjoint of the linear differential operator 
	$\mathcal{L}_{k,m}$, i.e., 
	$$\mathcal{L}_{k,m} ^{*} :  J_{k+2,m}^{cusp} \longrightarrow J_{k,m}^{cusp}.$$
For a given $\phi \in {J}_{k+2,m}^{cusp}$, we characterize the Fourier-Jacobi coefficients of $\mathcal{L}_{k,m} ^{*}\phi$ in terms of special values of certain Dirichlet series in the following theorem. 
\begin{thm}\label{ADLM}
Suppose that $k >4 $ be an integer and $\phi(\tau, z) \in J^{cusp}_{k+2, m}$ with the Fourier expansion
\begin{equation}\label{FJExp}
		\begin{split}
		\phi(\tau, z)=\sum\limits_{\substack{n \ge 1, r\in\mathbb{Z}\\ 4nm-r^{2}>0}} c(n,r)   q^n \zeta^r.
		\end{split}
		\end{equation}
		  Then, $\mathcal{L}_{k,m}^*\phi$ is a Jacobi form of weight $k$ and index $m$ with the Fourier expansion 
$$\mathcal{L}_{k,m}^*\phi(\tau, z)=\sum\limits_{\substack{\ell \ge 1, w~ \in\mathbb{Z}\\ 4\ell m-w^{2}>0}}c_{\mathcal{A}}(\ell,w)   q^{\ell} \zeta^{w}, $$
where 
		\begin{equation*}\label{ACoeff}
		\begin{split}
		c_{\mathcal{A}}(\ell,w) = \beta_{k,m,  \ell, w}
		  \bigg(\dfrac{(4\ell m-w^{2})- \frac{m(2k-1)}{6}}{(4m \ell-w^{2})^{k+\frac{1}{2}}} c(\ell,w)
		    +  4m(2k-1)  \sum_{p=1}^{\infty}    \frac{ \sigma_{1}(p) c(\ell+p,w) }{(4(\ell+p) m-w^{2})^{k+\frac{1}{2}}} \bigg).
		\end{split}
		\end{equation*}
  \end{thm}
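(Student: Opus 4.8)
The plan is to follow Kohnen's strategy \cite{kohnen}: test $\mathcal{L}_{k,m}^{*}\phi$ against the Jacobi--Poincar\'e series, move the operator onto the Poincar\'e series via the adjoint relation, compute $\mathcal{L}_{k,m}P_{k,m;(\ell,w)}$ in closed form as a convergent combination of weight-$(k+2)$ Poincar\'e series, and then read off the Fourier coefficients with \lemref{jacobi-poincare-lemma}. Since $(J_{k,m}^{cusp},\langle,\rangle)$ is a finite-dimensional Hilbert space, $\mathcal{L}_{k,m}^{*}$ exists with $\mathcal{L}_{k,m}^{*}\phi\in J_{k,m}^{cusp}$; writing its expansion as $\sum c_{\mathcal{A}}(\ell,w)q^{\ell}\zeta^{w}$ over pairs with $4\ell m-w^{2}>0$ (such a restriction being automatic, as $\mathcal{L}_{k,m}^{*}\phi$ is a cusp form), \lemref{jacobi-poincare-lemma} together with the defining property of the adjoint gives, for every such $(\ell,w)$,
$$
\alpha_{k,m}(4m\ell-w^{2})^{-k+3/2}\,c_{\mathcal{A}}(\ell,w)=\langle \mathcal{L}_{k,m}^{*}\phi,\,P_{k,m;(\ell,w)}\rangle=\langle \phi,\,\mathcal{L}_{k,m}P_{k,m;(\ell,w)}\rangle .
$$
So the problem reduces to evaluating $\langle \phi,\mathcal{L}_{k,m}P_{k,m;(\ell,w)}\rangle$.

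The heart of the argument is the identity
$$
\mathcal{L}_{k,m}P_{k,m;(\ell,w)}=\Big[(4\ell m-w^{2})-\tfrac{m(2k-1)}{6}\Big]P_{k+2,m;(\ell,w)}+4m(2k-1)\sum_{p\ge 1}\sigma_{1}(p)\,P_{k+2,m;(\ell+p,w)}.
$$
To prove it I would first establish the commutation rule $\mathcal{L}_{k,m}(f|_{k,m}\gamma)=(\mathcal{L}_{k,m}f)|_{k+2,m}\gamma$ for every holomorphic $f$ on $\mathbb{H}\times\mathbb{C}$ and every $\gamma\in\Gamma^{J}$, with $\mathcal{L}_{k,m}=\mathcal{L}_{m}-\tfrac{m(2k-1)}{6}E_{2}$ regarded as a differential operator: this comes from combining the heat-operator transformation \eqref{Heat-trans} with the quasi-modularity $(c\tau+d)^{-2}E_{2}(\gamma\tau)=E_{2}(\tau)+\tfrac{12}{2\pi i}\tfrac{c}{c\tau+d}$ and with the elementary fact that the weight-$2$ automorphy factor of $E_{2}$ factors out of $(E_{2}f)|_{k+2,m}\gamma$, whereupon the two anomalous $\tfrac{c}{c\tau+d}$-terms cancel exactly (which is precisely why $\mathcal{L}_{k,m}$ preserves Jacobi forms). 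Applying this termwise to $P_{k,m;(\ell,w)}=\sum_{\gamma\in\Gamma^{J}_{\infty}\backslash\Gamma^{J}}(q^{\ell}\zeta^{w})|_{k,m}\gamma$, using $\mathcal{L}_{m}(q^{\ell}\zeta^{w})=(4\ell m-w^{2})q^{\ell}\zeta^{w}$ and $E_{2}=1-24\sum_{p\ge 1}\sigma_{1}(p)q^{p}$, and recognizing $\sum_{\gamma}(q^{\ell+p}\zeta^{w})|_{k+2,m}\gamma=P_{k+2,m;(\ell+p,w)}$, yields the displayed identity.

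It then remains to pair with $\phi$ and to do bookkeeping. By \lemref{jacobi-poincare-lemma} at weight $k+2$, $\langle\phi,P_{k+2,m;(n,r)}\rangle=\alpha_{k+2,m}(4mn-r^{2})^{-k-1/2}c(n,r)$, so pulling $\langle\phi,\cdot\rangle$ through the sum over $p$ gives
$$
\langle\phi,\mathcal{L}_{k,m}P_{k,m;(\ell,w)}\rangle=\alpha_{k+2,m}\!\left(\frac{(4\ell m-w^{2})-\tfrac{m(2k-1)}{6}}{(4m\ell-w^{2})^{k+1/2}}\,c(\ell,w)+4m(2k-1)\sum_{p\ge 1}\frac{\sigma_{1}(p)\,c(\ell+p,w)}{(4m(\ell+p)-w^{2})^{k+1/2}}\right).
$$
Comparing with the reduction above, $c_{\mathcal{A}}(\ell,w)=\tfrac{\alpha_{k+2,m}}{\alpha_{k,m}}(4m\ell-w^{2})^{k-3/2}(\cdots)$, and since $\tfrac{\alpha_{k+2,m}}{\alpha_{k,m}}=\tfrac{m^{2}\Gamma(k+1/2)}{\pi^{2}\Gamma(k-3/2)}=\tfrac{m^{2}(k-\frac12)(k-\frac32)}{\pi^{2}}$, one obtains $\tfrac{\alpha_{k+2,m}}{\alpha_{k,m}}(4m\ell-w^{2})^{k-3/2}=\beta_{k,m,\ell,w}$, which is the claimed formula.

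The main obstacle is the closed-form evaluation of $\mathcal{L}_{k,m}P_{k,m;(\ell,w)}$ together with its analytic underpinnings: differentiating the Poincar\'e series termwise, interchanging the coset sum with the $p$-summation arising from the expansion of $E_{2}$, verifying that the series $\sum_{p}\sigma_{1}(p)P_{k+2,m;(\ell+p,w)}$ converges, and pulling $\langle\phi,\cdot\rangle$ through it. This is exactly where $k>4$ is used: on the one hand the double sum $\sum_{\gamma,p}\sigma_{1}(p)\,|(q^{\ell+p}\zeta^{w})|_{k+2,m}\gamma|$ is absolutely convergent only when $k-2>2$; on the other hand, by \lemref{Jacobi-convergence} one has $c(\ell+p,w)\ll|4m(\ell+p)-w^{2}|^{k/2+1/2}$ while $\sigma_{1}(p)\ll p^{1+\varepsilon}$, so the general term of the final series is $\ll p^{1-k/2+\varepsilon}$, summable precisely when $k>4$. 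Combined with the standard growth estimates for Poincar\'e series, this legitimizes all the interchanges; the rest is routine manipulation of $\Gamma$-factors.
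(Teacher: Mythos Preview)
Your argument is correct and is essentially the paper's own proof: both use \lemref{jacobi-poincare-lemma} together with the adjoint relation to reduce to $\langle\phi,\mathcal{L}_{k,m}P_{k,m;(\ell,w)}\rangle$, invoke the commutation $\mathcal{L}_{k,m}\bigl(q^{\ell}\zeta^{w}|_{k,m}\gamma\bigr)=\bigl(\mathcal{L}_{k,m}(q^{\ell}\zeta^{w})\bigr)|_{k+2,m}\gamma$ (\lemref{StokeHeat}), and justify the interchanges for $k>4$ via \lemref{Jacobi-convergence} (cf.\ \lemref{jacobi-mainlemma}). The only cosmetic difference is that you package the result as an identity $\mathcal{L}_{k,m}P_{k,m;(\ell,w)}=\sum_{p\ge 0}(\cdots)P_{k+2,m;(\ell+p,w)}$ and then apply \lemref{jacobi-poincare-lemma} at weight $k+2$, whereas the paper unfolds the Petersson integral directly and evaluates it with \lemref{IntegralApprox}; since \lemref{IntegralApprox} is precisely the integral underlying \lemref{jacobi-poincare-lemma}, these are the same computation organized differently.
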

Let $\mathcal{O}^{*}$ denote the adjoint of the Oberdieck differential operator $\mathcal{O}$, i.e.,
	\begin{equation*}\label{ObedOperator}
	\begin{split}
	\mathcal{O}^{*}: J_{k+2,m}^{cusp} \rightarrow J_{k, m}^{cusp},  \quad \phi \mapsto  \mathcal{O}^{*} \phi.
	\end{split}
	\end{equation*} 
	It is characterized by the following theorem.

	\begin{thm}\label{ADOBD}
		Suppose that $k > 4$ be an integer  and  $\phi(\tau, z)$ be as in \thmref{ADLM} with $(n,r)$-th Fourier coefficient $c(n,r)$.  Then $\mathcal{O}^{*} \phi$ is a Jacobi form of weight $k$ and index $m$ with the Fourier expansion
$$\mathcal{O}^{*} \phi(\tau, z)=\sum\limits_{\substack{ \ell \ge 1, w \in \mathbb{Z}\\ 4\ell m-w^{2}>0}} c_{\mathcal{B}}(\ell,w)   q^{\ell} \zeta^{w}, $$
		where 
		\begin{equation}\label{BCoeff}
		c_{\mathcal{B}}(\ell,w) = \beta_{k,m,  \ell, w}\; [ A_{1} + A_{2} + A_{31} +A_{32} + A_{4} +A_{5}] 
		\end{equation}
and 
 \begin{equation*}
	\begin{split}
	 A_{1} &= ~~\left( \ell - \frac{k}{12} + \frac{w}{2} +\frac{m}{6} \right)  \frac{c(\ell,w)}{(4\ell m-w^{2})^{k+\frac{1}{2}}},\\
	 A_{2} &=2 k ~ ~ \displaystyle{\sum_{p=1}^{\infty}    \sigma_{1}(p)} ~ \frac{ c(\ell+p,w)}{(4(\ell+p) m-w^{2})^{k+\frac{1}{2}}},\\
	 A_{31}  & = \frac{w}{2}  \sum_{p\ge 0} \frac{c(\ell, w+p+1)}{((4m\ell-(w+p+1)^2))^{k+\frac{1}{2}}} \\
	                    & \!\!\!\! \!\!\!\! \times  \bigg[1- \frac{2 \Gamma(k+1)}{\sqrt{\pi}\Gamma(k+1/2)}  
	                    \frac{(w+p+1)}{((4m\ell-(w+p+1)^2))^{\frac{1}{2}}} {}_{2}F_{1}\left(\frac{1}{2}, k+1; \frac{3}{2}; \frac{-(p+w+1)^{2}}{4 \ell ~m - (p+w+1)^{2}}\right)  \bigg], \\
	 A_{32}  & =  -\frac{w}{2}  \sum_{p\ge 0} \frac{c(\ell, p-w)}{((4m\ell-(p-w)^2))^{k+\frac{1}{2}}} \\
	                    & \quad \times  \bigg[1- \frac{2 \Gamma(k+1)}{\sqrt{\pi}\Gamma(k+1/2)}  
	                    \frac{(p-w)}{((4m\ell-(p-w)^2))^{\frac{1}{2}}} {}_{2}F_{1}\left(\frac{1}{2}, k+1; \frac{3}{2}; \frac{-(p-w)^{2}}{4 \ell ~m - (p-w)^{2}}\right)  \bigg], \\
	A_{4} &=  w~~ \sum_{p \ge 1} \sum_{d \vert p}  \left(  \frac{ c(\ell+p,w+d)}{(4(\ell+p) m-(w+d)^{2})^{k+\frac{1}{2}}} - \frac{ c(\ell+p,w-d)}{(4(\ell+p) m-(w-d)^{2})^{k+\frac{1}{2}}} \right), \\
	A_{5} & =  -2m~~ \sum_{p \ge 1} \sum_{d \vert p} \frac{p}{d} \left(  \frac{ c(\ell+p,w+d)}{(4(\ell+p) m-(w+d)^{2})^{k+\frac{1}{2}}} +  \frac{ c(\ell+p,w-d)}{(4(\ell+p) m-(w-d)^{2})^{k+\frac{1}{2}}} \right) .
\end{split}
\end{equation*}		
	\end{thm}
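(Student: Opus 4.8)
The plan is to adapt the argument via Jacobi--Poincar\'e series used for $\mathcal{L}_{k,m}^{*}$ in \thmref{ADLM}. Since $k>4$, the finite--dimensional Hilbert space $J_{k,m}^{cusp}$ is spanned by the cusp Jacobi--Poincar\'e series $P_{k,m;(\ell,w)}$ with $4\ell m-w^{2}>0$, so $\mathcal{O}^{*}\phi\in J_{k,m}^{cusp}$ is determined by the numbers $\langle \mathcal{O}^{*}\phi,\,P_{k,m;(\ell,w)}\rangle$. Writing $\mathcal{O}^{*}\phi=\sum c_{\mathcal{B}}(\ell,w)q^{\ell}\zeta^{w}$ and combining \lemref{jacobi-poincare-lemma} (in weight $k$) with the defining property of the adjoint gives
\begin{equation*}
c_{\mathcal{B}}(\ell,w)=\frac{(4\ell m-w^{2})^{\,k-\frac32}}{\alpha_{k,m}}\,\langle \phi,\,\mathcal{O}P_{k,m;(\ell,w)}\rangle ,
\end{equation*}
so the whole task reduces to evaluating $\langle \phi,\,\mathcal{O}P_{k,m;(\ell,w)}\rangle$ for $\phi\in J_{k+2,m}^{cusp}$.

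The next step is to recognise $\mathcal{O}P_{k,m;(\ell,w)}$ as a Poincar\'e series with an explicit seed. The Oberdieck derivative is covariant for the slash action: for every $\gamma\in\Gamma^{J}$ and every formal Fourier series $\psi$ one has $\mathcal{O}(\psi|_{k,m}\gamma)=(\mathcal{O}\psi)|_{k+2,m}\gamma$ — this is exactly the reason $\mathcal{O}$ preserves Jacobi forms, and it follows by feeding the transformation laws for $D_{\tau}$ and $D_{z}$, the quasi-modularity of $E_{2}$, and the quasi-invariances \eqref{J1Transf}, \eqref{J2Transf} of $J_{1},J_{2}$ into \eqref{OBDerivative}, whereupon all correction terms cancel; this is the content of the lemmas of Section~4. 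Applying this identity to the seed $e_{\ell,w}(\tau,z):=e^{2\pi i(\ell\tau+wz)}$, together with $D_{\tau}e_{\ell,w}=\ell\, e_{\ell,w}$ and $D_{z}e_{\ell,w}=w\, e_{\ell,w}$, I would obtain
\begin{equation*}
\mathcal{O}P_{k,m;(\ell,w)}=\!\!\sum_{\gamma\in\Gamma^{J}_{\infty}\backslash\Gamma^{J}}\!\!\big(\mathcal{O}e_{\ell,w}\big)\big|_{k+2,m}\gamma ,\qquad \mathcal{O}e_{\ell,w}=\Big(\ell-\tfrac{k}{12}E_{2}-w\,J_{1}+m\,J_{2}\Big)e_{\ell,w}.
\end{equation*}
Then the Petersson product against $\phi$ unfolds in the usual way (the interchange of $\sum_{\gamma}$ and $\int$ being legitimate for $k>4$ thanks to the cusp estimate of \lemref{Jacobi-convergence}):
\begin{equation*}
\langle \phi,\,\mathcal{O}P_{k,m;(\ell,w)}\rangle=\int\limits_{\Gamma^{J}_{\infty}\backslash\, \mathbb{H}\times\mathbb{C}}\phi(\tau,z)\,\overline{(\mathcal{O}e_{\ell,w})(\tau,z)}\;v^{k+2}e^{-4\pi m y^{2}/v}\,dV_{J}.
\end{equation*}

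Finally I would insert the Fourier expansions of $E_{2},J_{1},J_{2}$ from \eqref{TwistESeries} and of $\phi=\sum c(n,r)q^{n}\zeta^{r}$ into this integral. Integrating over $u\in[0,1]$ and $x\in[0,1]$ extracts matching $q$- and $\zeta$-modes; completing the square in $y$ against the Gaussian and integrating in $v$ yields $\Gamma$-factors. The periodic contributions are routine: the constant part $\ell-\tfrac{k}{12}+\tfrac{w}{2}+\tfrac{m}{6}$ of $\mathcal{O}e_{\ell,w}$ produces $A_{1}$, the $q$-tail of $E_{2}$ produces $A_{2}$ (hence the divisor function $\sigma_{1}$), and the $q$-tails of $J_{1},J_{2}$, re-summed over divisors, produce $A_{4}$ and $A_{5}$. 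The one delicate summand is $\zeta/(\zeta-1)$ in $J_{1}$: its geometric expansion is $-\sum_{j\ge1}\zeta^{j}$ on $\im z>0$ and $\sum_{j\ge0}\zeta^{-j}$ on $\im z<0$, so the $y$-integral must be split at $y=0$; each half-line integral introduces a complementary error function, and with the identity
\begin{equation*}
\int_{0}^{\infty}\!v^{\,k-\frac12}e^{-av}\,\mathrm{erfc}\!\left(b\sqrt v\right)dv=\frac{\Gamma(k+\tfrac12)}{a^{\,k+\frac12}}\!\left(1-\frac{2\,\Gamma(k+1)}{\sqrt{\pi}\,\Gamma(k+\tfrac12)}\,\frac{b}{\sqrt a}\;{}_{2}F_{1}\!\left(\tfrac12,k+1;\tfrac32;-\tfrac{b^{2}}{a}\right)\right)
\end{equation*}
(valid for either sign of $b$ provided $a+b^{2}>0$, proved by expanding $\mathrm{erf}$ in its Taylor series and integrating termwise) the $\im z>0$ contribution becomes $A_{31}$ and the $\im z<0$ contribution becomes $A_{32}$ — the hypergeometric arguments $-(w+p+1)^{2}\big/\big(4\ell m-(w+p+1)^{2}\big)$ and $-(p-w)^{2}\big/\big(4\ell m-(p-w)^{2}\big)$ are precisely $-b^{2}/a$ in the two cases. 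Summing the six groups, multiplying by $(4\ell m-w^{2})^{k-\frac32}/\alpha_{k,m}$, and collapsing the Gamma quotients via $\Gamma(k+\tfrac12)=(k-\tfrac12)(k-\tfrac32)\Gamma(k-\tfrac32)$ produces the common factor $\beta_{k,m,\ell,w}$ and the expansion \eqref{BCoeff}; that $\mathcal{O}^{*}\phi$ genuinely lies in $J_{k,m}^{cusp}$ is automatic, it being the adjoint image of $\phi\in J_{k+2,m}^{cusp}$. The hard part will be the bookkeeping around $\zeta/(\zeta-1)$: handling its region-dependent expansion, justifying the termwise integration against the ``weak'' seed $\mathcal{O}e_{\ell,w}$ (whose Fourier support is not confined to $4nm-r^{2}>0$, so several of the resulting series converge only after analytic continuation of ${}_{2}F_{1}$), and identifying the error-function integrals with the hypergeometric functions in $A_{31}$ and $A_{32}$.
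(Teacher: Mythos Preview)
Your proposal is correct and follows essentially the same route as the paper: compute $c_{\mathcal{B}}(\ell,w)$ via $\langle\phi,\mathcal{O}P_{k,m;(\ell,w)}\rangle$, use the covariance of $\mathcal{O}$ with the slash action (Lemma~\ref{StokeOBD}) to unfold, expand $E_{2},J_{1},J_{2}$ to split into the five pieces, and handle the $\zeta/(\zeta-1)$ term by splitting the $y$-integral at $0$ and invoking the error-function/hypergeometric integral. Your extra remarks on convergence and analytic continuation of ${}_{2}F_{1}$ go slightly beyond what the paper spells out, but the strategy is the same.
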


Combining the results of the theorems \ref{ADLM}  and \ref{ADOBD}, we get the Fourier coefficients of  ${\partial}^{J *} \phi $  associated to $\phi \in J^{cusp}_{k+2, m}$ in the following theorem.
\begin{thm}\label{ADJSerre}
Suppose that $k > 4$  be an integer   and  $\phi(\tau, z)$ be as in \thmref{ADLM} with $(n,r)$-th Fourier coefficient $c(n,r)$. Then ${\partial}^{J *} \phi$ is a Jacobi form of weight $k$ and index $m$ with the Fourier expansion 
		$${\partial}^{J *} \phi(\tau, z)=\sum\limits_{\substack{\ell \ge 1, w \in\mathbb{Z}\\ 4\ell m-w^{2}>0}} c_{J}(\ell,w)   q^{\ell} \zeta^{w}, $$
		where 
		$$
		c_{J}(\ell,w) = \frac{\beta_{k, m, \ell,  w}}{4m-1}(A^{*}   - ( A_{1} + A_{2} + A_{31} +A_{32} + A_{4} +A_{5})  )
		$$
\begin{equation*}
		\begin{split}
		&\text{with}~~ \qquad \quad A^{*} =
		  \bigg(\dfrac{(4\ell m-w^{2})- \frac{m(2k-1)}{6}}{(4m \ell-w^{2})^{k+\frac{1}{2}}} c(\ell,w) + ~ 4m(2k-1)  \sum_{p=1}^{\infty}   ~ \frac{ \sigma_{1}(p) c(\ell+p,w) }{(4(\ell+p) m-w^{2})^{k+\frac{1}{2}}} \bigg).
		\end{split}
		\end{equation*}
		and $A_{i}$'s are given in theorem  \thmref{ADOBD}. 
	\end{thm}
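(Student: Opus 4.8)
The plan is to derive Theorem~\thmref{ADJSerre} as a direct consequence of Theorems~\thmref{ADLM} and~\thmref{ADOBD} together with the decomposition \eqref{Jacobi-Serre-DSumT} of the Jacobi--Serre derivative. First I would record that, since $(J^{cusp}_{k+2,m}, \langle,\rangle)$ is a finite-dimensional Hilbert space, every $\C$-linear endomorphism of it has a unique adjoint, and the adjoint operation is itself $\C$-linear with $(aS+bT)^* = \ol a\, S^* + \ol b\, T^*$. Both $\mathcal{L}_{k,m}$ and $\mathcal{O}$ send $J^{cusp}_{k,m}$ to $J^{cusp}_{k+2,m}$ (the former as recorded in the Preliminaries, the latter by the boxed theorem preceding Section~3), so \eqref{Jacobi-Serre-DSumT} is an identity of operators $J^{cusp}_{k,m}\to J^{cusp}_{k+2,m}$. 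Since the scalar $\tfrac{1}{4m-1}$ is real, taking adjoints gives
$$
{\partial}^{J *} = \frac{1}{4m-1}\bigl(\mathcal{L}^{*}_{k,m} - \mathcal{O}^{*}\bigr)
$$
as operators on $J^{cusp}_{k+2,m}$, which in particular re-confirms that ${\partial}^{J *}\phi \in J^{cusp}_{k,m}$.

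Next I would feed a fixed $\phi \in J^{cusp}_{k+2,m}$ with Fourier expansion \eqref{FJExp} into both Theorem~\thmref{ADLM} and Theorem~\thmref{ADOBD}. The former gives $\mathcal{L}^{*}_{k,m}\phi = \sum c_{\mathcal{A}}(\ell,w) q^{\ell}\zeta^{w}$ with $c_{\mathcal{A}}(\ell,w) = \beta_{k,m,\ell,w}\, A^{*}$, where $A^{*}$ is precisely the bracketed expression displayed in the statement of Theorem~\thmref{ADJSerre}; the latter gives $\mathcal{O}^{*}\phi = \sum c_{\mathcal{B}}(\ell,w) q^{\ell}\zeta^{w}$ with $c_{\mathcal{B}}(\ell,w) = \beta_{k,m,\ell,w}\,(A_{1}+A_{2}+A_{31}+A_{32}+A_{4}+A_{5})$. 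The hypothesis $k>4$ is common to all three statements (and is exactly what is needed, via Lemma~\thmref{Jacobi-convergence}, for the series defining the $A_i$ and $A^{*}$ to converge), so I may subtract the two Fourier expansions term by term and divide by $4m-1$ to obtain
$$
{\partial}^{J *}\phi = \frac{1}{4m-1}\sum_{\substack{\ell \ge 1,\, w \in \Z\\ 4\ell m - w^{2} > 0}} \beta_{k,m,\ell,w}\Bigl(A^{*} - \bigl(A_{1}+A_{2}+A_{31}+A_{32}+A_{4}+A_{5}\bigr)\Bigr) q^{\ell}\zeta^{w},
$$
which is exactly the claimed formula for $c_{J}(\ell,w)$.

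There is essentially no obstacle here: the analytic content lives entirely inside Theorems~\thmref{ADLM} and~\thmref{ADOBD}, and the only points requiring care are bookkeeping ones --- checking that $\mathcal{L}_{k,m}$ and $\mathcal{O}$ genuinely preserve cuspidality so that all three adjoints are taken on the same space $J^{cusp}_{k+2,m}$, and noting that it is the real-scalar linearity of the adjoint that lets the factor $\tfrac{1}{4m-1}$ pass outside unchanged. If one preferred a self-contained argument not quoting \eqref{Jacobi-Serre-DSumT}, one could instead verify directly from \eqref{Jacobi-Serre-D}, \eqref{OBDerivative}, and the definition $\mathcal{L}_{k,m}=\mathcal{L}_{m}-\tfrac{m(2k-1)}{6}E_{2}$ that $\mathcal{L}_{k,m}\phi - \mathcal{O}\phi = (4m-1)\,{\partial}^{J}\phi$ on $J^{cusp}_{k,m}$, but this is precisely the elementary computation already carried out in \eqref{Jacobi-Serre-DSumT}--\eqref{Obed-D}.
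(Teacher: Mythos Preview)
Your proposal is correct and follows exactly the paper's approach: the paper states Theorem~\ref{ADJSerre} as an immediate consequence of Theorems~\ref{ADLM} and~\ref{ADOBD} via the operator identity ${\partial}^{J *} = \tfrac{1}{4m-1}(\mathcal{L}^{*}_{k,m}-\mathcal{O}^{*})$ deduced from \eqref{Jacobi-Serre-DSumT}, and gives no separate proof beyond the remark ``Combining the results of the theorems~\ref{ADLM} and~\ref{ADOBD}\ldots''. Your write-up simply makes this combination explicit.
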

	
\section{Key Ingredient}
We know that   the Eisenstein series $E_{2}(\tau)$ satisfies the following transformation property:
\begin{equation}\label{eq:trans of E_2}
E_{2}\lp \frac {a\tau+b}{c\tau+d}\rp = (c \tau + d)^{2} \left( E_{2}(\tau)+ \frac{6}{\pi i} \frac{c}{c\tau+d}\right)\;\; \text{for all} \abcd \in SL_2(\Z) ~\text{and}~ \tau \in \mathbb{H}, 
\end{equation}	
and it has the following Fourier series given by 
\begin{equation}\label{E2Exp}
\begin{split}
E_{2}(\tau) = 1- 24 \sum_{n \ge 1} \sigma_{1}(n) q^{n}.
\end{split}
\end{equation}
For given positive integers $k, m$ and $n$, an integer $r$ and $\gamma= \left( \left(\begin{array}{cc}
	a & b\\
	c & d\\
	\end{array}\right), (\lambda, \mu) \right) \in \Gamma^J,$ 
let 
\begin{equation}\label{AEq}
\begin{split}
A_{k,m, n,r}(\gamma .(\tau,z)) &:= {(q^{n} \zeta^{r})}|_{k, m}  \gamma  \\ 
&= (c\tau+d)^{-k} 
e^{ 2 \pi i m\left(-\dfrac{c(z+\lambda \tau+\mu)^2}{c\tau+d}+ \lambda^2\tau+2\lambda z\right)}
e^{2\pi i \Big( n\frac {a\tau+b}{c\tau+d}+ r\frac{z+\lambda \tau+ \mu}{c\tau+d} \Big)}.
\end{split}
\end{equation} 
We observe that 
\begin{equation}\label{AEq}
\begin{split}
 A_{k+2,m,n,r}(\gamma .(\tau,z)) = (c\tau+d)^{-2} A_{k,m,n,r}(\gamma .(\tau,z)).
 \end{split}
\end{equation}
The derivatives of the function $A_{k,m, n,r}(\gamma .(\tau,z))$ also satisfy the following properties.
\begin{equation}\label{eq:D_tau}
\begin{split}
&D_{\tau} A_{k,m,n,r}(\gamma .(\tau,z)) = A_{k,m,n,r}(\gamma .(\tau,z))
\Bigg[
\frac{-kc}{2\pi i(c\tau+d)} -\frac{2mc\l(z+\l\t+\mu)}{c\t+d}  
\\
&\hspace{2.5cm}
+\frac{mc^2(z+\l\t+\mu)^2}{(c\t+d)^2} +m\l^2+\frac{n}{(c\t +d)^2} +\frac{r\l}{c\t+d} -\frac{rc(z+\l\t+\mu)}{(c\t+d)^2}
\Bigg],
\end{split}
\end{equation}
\begin{equation}\label{eq:D_z}
\begin{split}
&D_{z}A_{k,m, n,r}(\gamma .(\tau,z))= A_{k,m, n,r}(\gamma .(\tau,z))
\Bigg[
\frac{r-2mc(z+\l\t+\mu)}{c\t+d}
+2m\l
\Bigg]
\end{split}
\end{equation}
and
\begin{equation}\label{eq:D_z^2}
\begin{split}
&D_{z}^2
A_{k,m, n,r}(\gamma .(\tau,z)) = A_{k,m, n,r}(\gamma .(\tau,z)) \Bigg[
\Bigg(
\frac{r -2mc(z+\l\t+\mu)}{c\t+d}
+2m\l
\Bigg)^2
\!\!\!-\frac{mc}{\pi i(c\tau+d)} 
\Bigg].
\end{split}
\end{equation}
The above transformation properties helps  us to get the transformation property of the action of $\mathcal{L}_{k,m}$, Oberdieck derivative $\mathcal{O}$ and Jacobi-Serre derivative ${\partial}^{J}$ on the monomials. For the sake of completeness, we provide brief details of the proof.

	\begin{lem}\label{StokeHeat}
		For any $k, m, n \in \mathbb{N}$, $r \in \mathbb{Z}$ and any $\gamma= \left( \left(\begin{array}{cc}
	a & b\\
	c & d\\
	\end{array}\right), (\lambda, \mu) \right) \in \Gamma^J,$ we have 
		\begin{equation}\label{StokeHeatAction}
		\left(\mathcal{L}_{m}- \frac{m(2k-1)}{6}E_{2}\right) ({q^{n} \zeta^{r}}|_{k, m}  \gamma) = \left( \left(\mathcal{L}_{m}- \frac{m(2k-1)}{6}E_{2}\right) {q^{n} \zeta^{r}}\right)|_{k+2, m}  \gamma. 
		\end{equation}
	\end{lem}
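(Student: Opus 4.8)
The plan is to reduce the identity to a direct computation on the single monomial $q^{n}\zeta^{r}$, since both sides of \eqref{StokeHeatAction} are already evaluated there. Write $A := A_{k,m,n,r}(\gamma.(\tau,z)) = q^{n}\zeta^{r}|_{k,m}\gamma$, abbreviate $j := c\tau+d$ and $Z := z+\lambda\tau+\mu$, and recall $\mathcal{L}_{m} = 4mD_{\tau}-D_{z}^{2}$. First I would compute $\mathcal{L}_{m}(A) = 4mD_{\tau}A - D_{z}^{2}A$ by inserting the explicit formulas \eqref{eq:D_tau} and \eqref{eq:D_z^2}: by \eqref{eq:D_z^2} the bracket multiplying $D_{z}^{2}A$ is $\bigl(\tfrac{r-2mcZ}{j}+2m\lambda\bigr)^{2}-\tfrac{mc}{\pi i\,j}$, and after expanding $4mD_{\tau}A$ with \eqref{eq:D_tau} and subtracting the square, all terms involving $\lambda$ or $Z$ cancel in pairs, leaving
\[
\mathcal{L}_{m}(q^{n}\zeta^{r}|_{k,m}\gamma) \;=\; \left(\frac{4mn-r^{2}}{(c\tau+d)^{2}} \;-\; \frac{m(2k-1)\,c}{\pi i(c\tau+d)}\right)\bigl(q^{n}\zeta^{r}|_{k,m}\gamma\bigr).
\]
Since $\mathcal{L}_{m}(q^{n}\zeta^{r}) = (4mn-r^{2})q^{n}\zeta^{r}$ and $A_{k+2,m,n,r}(\gamma.(\tau,z)) = (c\tau+d)^{-2}A_{k,m,n,r}(\gamma.(\tau,z))$ by \eqref{AEq}, this says exactly that $\mathcal{L}_{m}(q^{n}\zeta^{r}|_{k,m}\gamma)$ differs from $\bigl(\mathcal{L}_{m}(q^{n}\zeta^{r})\bigr)|_{k+2,m}\gamma$ by the single error term $-\tfrac{m(2k-1)c}{\pi i(c\tau+d)}\bigl(q^{n}\zeta^{r}|_{k,m}\gamma\bigr)$.

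Next I would check that the $E_{2}$-correction built into $\mathcal{L}_{k,m}$ produces exactly the opposite error. Unwinding the definition of the slash operator and inserting the transformation law \eqref{eq:trans of E_2}, the two powers of $c\tau+d$ coming from $E_{2}(\gamma\tau)$ combine with the weight-$(k+2)$ automorphy factor to recover $E_{2}$ times the weight-$k$ slash of $q^{n}\zeta^{r}$, plus a remainder; concretely,
\[
\bigl(E_{2}\,q^{n}\zeta^{r}\bigr)|_{k+2,m}\gamma \;=\; E_{2}\cdot\bigl(q^{n}\zeta^{r}|_{k,m}\gamma\bigr) \;+\; \frac{6}{\pi i}\frac{c}{c\tau+d}\,\bigl(q^{n}\zeta^{r}|_{k,m}\gamma\bigr).
\]
Hence $E_{2}\cdot(q^{n}\zeta^{r}|_{k,m}\gamma)$ differs from $(E_{2}q^{n}\zeta^{r})|_{k+2,m}\gamma$ by $-\tfrac{6c}{\pi i(c\tau+d)}(q^{n}\zeta^{r}|_{k,m}\gamma)$.

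Finally, I would put the two pieces together. Writing the left side of \eqref{StokeHeatAction} as $\mathcal{L}_{m}(q^{n}\zeta^{r}|_{k,m}\gamma) - \tfrac{m(2k-1)}{6}E_{2}\cdot(q^{n}\zeta^{r}|_{k,m}\gamma)$ and its right side as $(\mathcal{L}_{m}q^{n}\zeta^{r})|_{k+2,m}\gamma - \tfrac{m(2k-1)}{6}(E_{2}q^{n}\zeta^{r})|_{k+2,m}\gamma$, the difference of the two sides is
\[
-\frac{m(2k-1)c}{\pi i(c\tau+d)}\,A \;-\; \frac{m(2k-1)}{6}\left(-\frac{6c}{\pi i(c\tau+d)}\right)A \;=\; 0,
\]
so the two sides coincide and \lemref{StokeHeat} follows. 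I expect the one genuinely laborious step to be the bookkeeping in the first display — the term-by-term cancellation of all $\lambda$- and $Z$-dependent contributions to $4mD_{\tau}A - D_{z}^{2}A$ — while everything afterward is a short manipulation of automorphy factors. (One could instead cite the already-recorded functional equation \eqref{Heat-trans} directly, but carrying out the computation from \eqref{eq:D_tau}--\eqref{eq:D_z^2} keeps the argument self-contained and provides the exact template for the analogous statements about the Oberdieck derivative $\mathcal{O}$ and the Jacobi-Serre derivative $\partial^{J}$.)
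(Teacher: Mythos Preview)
Your proposal is correct and follows essentially the same approach as the paper: both sides are computed explicitly using the derivative formulas \eqref{eq:D_tau}, \eqref{eq:D_z^2} for $\mathcal{L}_{m}A$ and the transformation law \eqref{eq:trans of E_2} for the $E_{2}$-part, arriving at the common expression $A_{k+2,m,n,r}\bigl[(4mn-r^{2})-\tfrac{m(2k-1)}{6}(c\tau+d)^{2}(E_{2}+\tfrac{6c}{\pi i(c\tau+d)})\bigr]$. The only difference is organizational --- you isolate the two error terms and show they cancel, whereas the paper expands each side fully and matches --- but the underlying computation is identical.
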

	\noindent
	\begin{proof}
Let us consider
	$\phi(\t, z)
	:=\lp \mathcal{L}_{m}- \frac{m(2k-1)}{6}E_{2}\rp q^n\z^r 
	= \lp 4m D_{\tau} -D_z^2  - \frac{m(2k-1)}{6} E_{2} \rp q^n\z^r.$ 
Then,
\begin{equation}\label{eq: 1nd part of L_m ... and slash}
\begin{split}
&\phi|_{k+2, m} \gamma  =  A_{k+2,m, n,r}(\gamma .(\tau,z))\times \bigg[ 4m n -r^2  - \frac{m(2k-1)}{6} E_{2}\lp \frac {a\tau+b}{c\tau+d}\rp \bigg] \\
& =  A_{k+2,m, n,r}(\gamma .(\tau,z))\times \bigg[ 4m n -r^2  - \frac{m(2k-1)}{6} (c \tau+d)^{2} \left( E_{2}(\tau)+ \frac{6c}{ \pi i (c\tau+d)} \right) \bigg] \\ 
\end{split}
\end{equation}
which follows from \eqref{eq:trans of E_2} and  \eqref{AEq}. 
Now, we consider
\begin{equation*}
\begin{split}
&\left(\mathcal{L}_{m}- \frac{m(2k-1)}{6}E_{2}\right) ({q^{n} \zeta^{r}}|_{k, m}  \gamma) = \lp 4m D_{\tau} -D_z^2  - \frac{m(2k-1)}{6} E_{2} \rp A_{k,m, n,r}(\gamma .(\tau,z)).  \\	
\end{split}	
\end{equation*}
	Using  \eqref{eq:trans of E_2}, \eqref{AEq}, \eqref{eq:D_tau} and \eqref{eq:D_z^2},  we get
\begin{equation}\label{eq: 2nd part of L_m ... and slash}
\begin{split}
&\left(\mathcal{L}_{m}- \frac{m(2k-1)}{6}E_{2}(\tau)\right) ({q^{n} \zeta^{r}}|_{k, m}  \gamma) \\
&  = A_{k,m, n,r}(\gamma .(\tau,z))
\Bigg[
\frac{4mn-r^{2}}{(c\t+d)^2}
-\frac{m(2k-1)}{6}\Big(E_{2}(\tau)
+\frac{6c}{\pi i(c\t+d)}\Big)	
\Bigg] \\
&  = A_{k+2,m, n,r}(\gamma .(\tau,z)) \bigg[ 4m n -r^2  - \frac{m(2k-1)}{6} (c \tau+d)^{2} \left( E_{2}(\tau)+ \frac{6c}{ \pi i (c\tau+d)} \right) \bigg].\\
\end{split}	
\end{equation}
From \eqref{eq: 1nd part of L_m ... and slash} and \eqref{eq: 2nd part of L_m ... and slash}, we get the required transformation property.	
\end{proof}
	
\begin{lem}\label{StokeOBD}
For any $k, m, n \in \mathbb{N}$, $r \in \mathbb{Z}$ and any $\gamma= \left( \left(\begin{array}{cc}
	a & b\\
	c & d\\
	\end{array}\right), (\lambda, \mu) \right) \in \Gamma^J,$ we have  
\begin{equation*}\label{StokeOBDAction}
\begin{split}
&    \left(  \left(  D_{\tau}  - \frac{k}{12} E_{2}  - J_{1} D_{z}  + m J_{2} \right) {q^{n} \zeta^{r}} \right)|_{k+2, m}  \gamma = \left(  D_{\tau}  - \frac{k}{12} E_{2}  - J_{1} D_{z}  + m J_{2}\right) ({q^{n} \zeta^{r}}|_{k, m}  \gamma). \\
\end{split}
\end{equation*}
\end{lem}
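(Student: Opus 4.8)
The plan is to evaluate both sides on the monomial $q^{n}\zeta^{r}$ and verify that the two resulting functions coincide, exactly in the spirit of the proof of \lemref{StokeHeat}. Throughout, set $A := A_{k,m,n,r}(\gamma.(\tau,z)) = (q^{n}\zeta^{r})|_{k,m}\gamma$ and recall from \eqref{AEq} that $A_{k+2,m,n,r}(\gamma.(\tau,z)) = (c\tau+d)^{-2}A$. Since $D_{\tau}(q^{n}\zeta^{r}) = n\,q^{n}\zeta^{r}$ and $D_{z}(q^{n}\zeta^{r}) = r\,q^{n}\zeta^{r}$, one has $\mathcal{O}(q^{n}\zeta^{r}) = \bigl( n - \tfrac{k}{12}E_{2} - r J_{1} + m J_{2} \bigr) q^{n}\zeta^{r}$, so that after applying the weight $k+2$, index $m$ slash (and using that $E_{2}$ depends only on $\tau$ while $J_{1},J_{2}$ have index $0$) the left-hand side equals $A_{k+2,m,n,r}(\gamma.(\tau,z))$ times $n - \tfrac{k}{12}E_{2}(\gamma\tau) - r J_{1}(\gamma\cdot(\tau,z)) + m J_{2}(\gamma\cdot(\tau,z))$.

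The next step is to rewrite $E_{2}(\gamma\tau)$ via \eqref{eq:trans of E_2} and $J_{1}(\gamma\cdot(\tau,z))$, $J_{2}(\gamma\cdot(\tau,z))$ by combining the lattice part and the $\mathrm{SL}_{2}$ part of \eqref{J1Transf} and \eqref{J2Transf}: writing $\gamma = (M,(\lambda,\mu))$ with $M = \abcd$ and $\gamma\cdot(\tau,z) = M\cdot(\tau,\,z+\lambda\tau+\mu)$, one obtains
$$J_{1}(\gamma\cdot(\tau,z)) = (c\tau+d)(J_{1}-\lambda) + c(z+\lambda\tau+\mu),$$
$$J_{2}(\gamma\cdot(\tau,z)) = (c\tau+d)^{2}(J_{2} - 2\lambda J_{1} + \lambda^{2}) + 2c(z+\lambda\tau+\mu)(c\tau+d)(J_{1}-\lambda) + c^{2}(z+\lambda\tau+\mu)^{2},$$
where on the right $E_{2},J_{1},J_{2}$ are evaluated at $(\tau,z)$. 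Multiplying by $(c\tau+d)^{-2}$ and collecting terms writes the left-hand side as $A$ times an explicit expression involving $E_{2},J_{1},J_{2}$ together with the elementary functions $\tfrac{c}{c\tau+d}$, $\tfrac{c(z+\lambda\tau+\mu)}{c\tau+d}$, $\tfrac{c^{2}(z+\lambda\tau+\mu)^{2}}{(c\tau+d)^{2}}$, $\lambda$ and $\tfrac{n}{(c\tau+d)^{2}}$. For the right-hand side I would expand $\mathcal{O}(A) = D_{\tau}A - \tfrac{k}{12}E_{2}A - J_{1}D_{z}A + m J_{2}A$ using the derivative identities \eqref{eq:D_tau} for $D_{\tau}A$ and \eqref{eq:D_z} for $D_{z}A$, which again presents the right-hand side as $A$ times an expression of precisely the same shape.

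The concluding step is a direct term-by-term comparison of the two bracketed expressions. I expect them to be literally identical: the $E_{2}$-terms agree because $\tfrac{k}{12}\cdot\tfrac{6}{\pi i}=\tfrac{k}{2\pi i}$, the terms linear in $J_{1}$ and in $J_{2}$ match up, and the remaining purely elementary terms (those carrying $m\lambda^{2}$, $\tfrac{r\lambda}{c\tau+d}$, $\tfrac{rc(z+\lambda\tau+\mu)}{(c\tau+d)^{2}}$, $\tfrac{2mc\lambda(z+\lambda\tau+\mu)}{c\tau+d}$, and so on) coincide one with another. I do not anticipate a genuine obstacle; the only care needed is in correctly composing the lattice and modular pieces of the transformation laws for $J_{1}$ and $J_{2}$ on a general element of $\Gamma^{J}$, and in the lengthy but routine bookkeeping of the many elementary terms — just as in \lemref{StokeHeat}. (One might hope to shortcut via $\mathcal{O} = \mathcal{L}_{k,m}-(4m-1)\partial^{J}$ and \lemref{StokeHeat}, but the analogous intertwining for $\partial^{J}$ is not yet established, so the direct verification above is the cleanest route.)
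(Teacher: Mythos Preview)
Your proposal is correct and follows essentially the same route as the paper: compute the left-hand side by applying the weight $k+2$ slash to $(n-\tfrac{k}{12}E_{2}-rJ_{1}+mJ_{2})q^{n}\zeta^{r}$ and substituting the composite (lattice plus $\mathrm{SL}_{2}$) transformation laws for $E_{2}$, $J_{1}$, $J_{2}$ (your displayed formulas are exactly the paper's \eqref{eq:trans law of J_1} and \eqref{eq:trans law of J_2}), compute the right-hand side by applying $\mathcal{O}$ to $A$ via the derivative identities \eqref{eq:D_tau} and \eqref{eq:D_z}, and then match the bracketed expressions. Your remark that the shortcut through $\mathcal{O}=\mathcal{L}_{k,m}-(4m-1)\partial^{J}$ is circular here is also on point, since in the paper the commutation for $\partial^{J}$ is deduced \emph{from} this lemma together with \lemref{StokeHeat}.
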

\begin{proof}
We prove the lemma using the following transformation properties obtained from \eqref{J1Transf} and \eqref{J2Transf}:
\begin{equation}\label{eq:trans law of J_1}
J_1\lp \frac {a\tau+b}{c\tau+d}, \frac{z+\lambda \tau+ \mu}{c\tau+d} \rp
=(c\t+d)\Big(J_1(\t, z)	-\l\Big) +c(z+\l\t+\mu),
\end{equation}
and
\begin{equation}\label{eq:trans law of J_2}
\begin{split}
J_2\lp \frac {a\tau+b}{c\tau+d}, \frac{z+\lambda \tau+ \mu}{c\tau+d} \rp
&=(c\t+d)^2\Big(J_2(\t, z)-2\l J_1(\t, z)+\l^2\Big)\\
&+2c(c\t+d)(z+\l\t+\mu)\Big( J_1(\t,z) -\l\Big) +c^2(z+\l\t+\mu)^2.
\end{split}
\end{equation}
Let $\phi(\t, z): = \lp D_{\tau}  - \frac{k}{12} E_{2}  - J_{1} D_{z}  + m J_{2} \rp q^n\z^r =\left(n-\frac{k}{12} E_2-rJ_1+mJ_2\right)q^n\z^r$. 
Then, 
\begin{align*} 
&\quad   
\phi|_{k+2, m} \gamma = A_{k+2,m, n,r}(\gamma .(\tau,z))\Bigg[
n-\frac{k}{12} E_2\lp \frac {a\tau+b}{c\tau+d}\rp
-rJ_1\left( \frac {a\tau+b}{c\tau+d}, \frac{z+\lambda \tau+ \mu}{c\tau+d} \right)\\
&\hspace{9 cm}
+mJ_2 \left( \frac {a\tau+b}{c\tau+d}, \frac{z+\lambda \tau+ \mu}{c\tau+d} \right)
\Big)
\Bigg]. \\ 
\end{align*} 
Using \eqref{eq:trans of E_2}, \eqref{eq:trans law of J_1} and \eqref{eq:trans law of J_2}, we get
\begin{equation}\label{eq: 1st part of D_tau ... and slash}
\begin{split}
&\quad   
\phi|_{k+2, m} \gamma = A_{k+2,m, n,r}(\gamma .(\tau,z)) \times
\Bigg[
n- \frac{k}{12} 
\Big((c\t +d)^2E_2(\tau)
\quad +\frac{6c}{\pi i}(c\t+d)\Big) 
\\ &\hspace{2cm}
-r (c\t+d)\Big(J_1	-\l\Big)
-rc(z+\l\t+\mu)
+ m(c\t+d)^2\Big(J_2-2\l J_1+\l^2\Big)
 \\
&\hspace{4.2cm}
+2mc(c\t+d)(z+\l\t+\mu)\Big( J_1 -\l\Big)
+mc^2(z+\l\t+\mu)^2
\Bigg].	 
\\ 
\end{split}
\end{equation}
Now, we consider $G:= \left( D_{\tau}  - \frac{k}{12} E_{2}(\tau)  - J_{1} D_{z}  + m J_{2}\right) ({q^{n} \zeta^{r}}|_{k, m}  \gamma)$. Using \eqref{AEq}, we have
\begin{align*}
G = \left( D_{\tau}  - \frac{k}{12} E_{2}  - J_{1} D_{z}  + m J_{2}\right) A_{k,m, n,r}(\gamma .(\tau,z)).\\	
\end{align*}
Using \eqref{eq:D_tau} and \eqref{eq:D_z} , we get
\begin{equation}\label{eq: 2nd part of D_tau ... and slash}
\begin{split}
&G = A_{k,m, n,r}(\gamma .(\tau,z))
\Bigg[
\frac{n}{(c\t+d)^2}-\frac{k}{12}
\Big(E_2
+\frac{6c}{\pi i(c\t+d)}\Big)	
-\frac{r}{c\t+d} \Big(J_1	-\l\Big)
-rc\frac{z+\l\t+\mu}{(c\t+d)^2}
\\
&\hspace{2.8cm}
+
m\Big(J_2-2\l J_1+\l^2\Big)
+2mc\frac{z+\l\t+\mu}{c\t+d}\Big( J_1 -\l\Big) 
+mc^2\lp\frac{z+\l\t+\mu}{c\t+d}\rp^2
\Bigg)
\Bigg].
\end{split}	
\end{equation}
The assertion of the lemma follows from 
 \eqref{eq: 1st part of D_tau ... and slash} and \eqref{eq: 2nd part of D_tau ... and slash}.
\end{proof}
From \eqref{Jacobi-Serre-DSumT}, we know that Jacobi-Serre derivative is given as follows:
$$
{\partial}^{J} = \frac{1}{1-4m} \left[ \left(-\mathcal{L}_{m}+ \frac{m(2k-1)}{6}E_{2}\right) +  \left(   D_{\tau}  - \frac{k}{12} E_{2}  - J_{1} D_{z}  + m J_{2} \right) \right].
$$
From Lemmas \ref{StokeHeat} and \ref{StokeOBD}, we obtain that the action of Jacobi -Serre derivative and the action of stoke operator commute on monomials. In particular, we have the following lemma.
	\begin{lem}\label{StokeJacobiS}
		For any $k, m, n \in \mathbb{N}$, $r \in \mathbb{Z}$, and any $\gamma= \left( \left(\begin{array}{cc}
	a & b\\
	c & d\\
	\end{array}\right), (\lambda, \mu) \right) \in \Gamma^J,$ we have 
		\begin{equation*}
		 {\partial}^{J} ({q^{n} \zeta^{r}}|_{k, m}  \gamma) = \left( {\partial}^{J}  {q^{n} \zeta^{r}}\right)|_{k+2, m}  \gamma. 
		\end{equation*}
	\end{lem}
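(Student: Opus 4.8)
The plan is to derive this identity formally from Lemmas \ref{StokeHeat} and \ref{StokeOBD} together with the decomposition \eqref{Jacobi-Serre-DSumT} of the Jacobi--Serre derivative, rather than by a fresh computation with \eqref{AEq}--\eqref{eq:D_z^2}. Recall from \eqref{Jacobi-Serre-DSumT} that, acting on objects of weight $k$ and producing objects of weight $k+2$,
$$
{\partial}^{J} = \frac{1}{1-4m}\left[\left(-\mathcal{L}_{m}+\frac{m(2k-1)}{6}E_{2}\right) + \left(D_{\tau}-\frac{k}{12}E_{2}-J_{1}D_{z}+mJ_{2}\right)\right] = \frac{1}{4m-1}\left(\mathcal{L}_{k,m} - \mathcal{O}\right),
$$
where $\mathcal{L}_{k,m}=\mathcal{L}_{m}-\frac{m(2k-1)}{6}E_{2}$ and $\mathcal{O}=D_{\tau}-\frac{k}{12}E_{2}-J_{1}D_{z}+mJ_{2}$, and where the scalar $\tfrac{1}{4m-1}$ is nonzero since $m\in\mathbb{N}$.

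First I would apply ${\partial}^{J}$ to ${q^{n}\zeta^{r}}|_{k,m}\gamma$ and expand via the identity above, getting
$$
{\partial}^{J}\big({q^{n}\zeta^{r}}|_{k,m}\gamma\big) = \frac{1}{4m-1}\left[\mathcal{L}_{k,m}\big({q^{n}\zeta^{r}}|_{k,m}\gamma\big) - \mathcal{O}\big({q^{n}\zeta^{r}}|_{k,m}\gamma\big)\right].
$$
Then I would invoke \lemref{StokeHeat}, which is exactly the statement $\mathcal{L}_{k,m}\big({q^{n}\zeta^{r}}|_{k,m}\gamma\big)=\big(\mathcal{L}_{k,m}\,{q^{n}\zeta^{r}}\big)|_{k+2,m}\gamma$, and \lemref{StokeOBD}, which gives $\mathcal{O}\big({q^{n}\zeta^{r}}|_{k,m}\gamma\big)=\big(\mathcal{O}\,{q^{n}\zeta^{r}}\big)|_{k+2,m}\gamma$. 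Substituting and using that the slash operator $|_{k+2,m}\gamma$ is $\mathbb{C}$-linear — so it commutes with the scalar $\tfrac{1}{4m-1}$ and with the difference — the right-hand side becomes
$$
\frac{1}{4m-1}\left[\big(\mathcal{L}_{k,m}\,{q^{n}\zeta^{r}}\big) - \big(\mathcal{O}\,{q^{n}\zeta^{r}}\big)\right]\Big|_{k+2,m}\gamma = \big({\partial}^{J}{q^{n}\zeta^{r}}\big)|_{k+2,m}\gamma,
$$
which is the asserted identity.

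There is essentially no genuine obstacle; the only care needed is bookkeeping. I must check that $\tfrac{1}{4m-1}$ is a constant (hence passes through the slash action), that each of ${\partial}^{J}$, $\mathcal{L}_{k,m}$, $\mathcal{O}$ is read with source weight $k$ and target weight $k+2$ consistently with how \lemref{StokeHeat} and \lemref{StokeOBD} are stated, and that the $E_{2}$-coefficients $\tfrac{m(2k-1)}{6}$ and $\tfrac{k}{12}$ in $\mathcal{L}_{k,m}$ and $\mathcal{O}$ match the weight-$k$ normalization used in those two lemmas. Since both lemmas have already been established by direct computation using \eqref{eq:trans of E_2}, \eqref{eq:trans law of J_1}, \eqref{eq:trans law of J_2} and \eqref{eq:D_tau}--\eqref{eq:D_z^2}, the assertion follows at once by linearity.
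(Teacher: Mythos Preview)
Your proof is correct and follows essentially the same approach as the paper: the paper also derives \lemref{StokeJacobiS} directly from the decomposition \eqref{Jacobi-Serre-DSumT} together with Lemmas~\ref{StokeHeat} and~\ref{StokeOBD}, using linearity of the slash operator. Your write-up is in fact more detailed than the paper's, which simply remarks that the commutation follows from the two preceding lemmas.
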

\noindent	
We prove our next lemma which plays a crucial role to prove the main result of the paper.  
\begin{lem}\label{IntegralApprox}
Let $k,m,  \ell$ and $w$ be the integers and $4 \ell m- w^{2}>0$. Let  $q=e^{2 \pi i \tau}$ and $\zeta=e^{2 \pi i z}$ with $\tau = u+i v \in \mathbb{H}$ and $z= x+iy \in \mathbb{C}$, and $ dV_J := \frac{du dv dx dy}{v^{3}}$ denote the measure on $\mathbb{H} \times \mathbb{C}$. Then, we have  
\begin{equation*}
\begin{split}
I :=  \int \limits_{\Gamma_{\infty}^J\setminus\mathbb{H}\times \mathbb{C}} q^{n} \zeta^{r} \overline{q^{\ell} \zeta^{w}}~ v^{k+2} e^{\frac{-4 \pi my^{2}}{v}}  dV_J = 
\begin{cases} 
\frac{m^{k}}{ 2 {\pi}^{k+\frac{1}{2}}} ~ \frac{\Gamma\left(k+\frac{1}{2}\right)}{(4\ell m-w^{2})^{k+\frac{1}{2}}} \quad {\rm if} \quad n= \ell ~\& ~r=w, \\
0 \qquad \qquad \qquad \qquad \qquad {\rm otherwise}.
\end{cases}
\end{split}
\end{equation*}
\end{lem}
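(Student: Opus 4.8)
The plan is to compute the integral $I$ directly by unfolding the fundamental domain $\Gamma_\infty^J\setminus\mathbb{H}\times\mathbb{C}$ and separating the real and imaginary variables. Recall that $\Gamma_\infty^J$ is generated by the translations $\tau\mapsto\tau+t$ and $z\mapsto z+\mu$ (together with $z\mapsto z+\lambda\tau$; but since we integrate over a full period strip it is cleanest to use the model where the fundamental domain is $\{0\le u\le 1,\ 0\le x\le 1,\ v>0,\ y\in\mathbb{R}\}$, possibly after the standard change of variables that trivializes the $\lambda$-translation). Writing $q^n\zeta^r\overline{q^\ell\zeta^w}=e^{2\pi i(n-\ell)u}e^{2\pi i(r-w)x}e^{-2\pi(n+\ell)v}e^{-2\pi(r+w)y}$, the $du$ and $dx$ integrals over $[0,1]$ immediately kill everything unless $n=\ell$ and $r=w$, which gives the case distinction in the statement. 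This is the easy half and disposes of the "otherwise" case.

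In the diagonal case $n=\ell$, $r=w$, what remains is
\begin{equation*}
I=\int_{0}^{\infty}\int_{-\infty}^{\infty} e^{-4\pi\ell v}\,e^{-4\pi w y}\,v^{k+2}\,e^{-\frac{4\pi m y^2}{v}}\,\frac{dy\,dv}{v^{3}}
 =\int_{0}^{\infty} v^{k-1}e^{-4\pi\ell v}\left(\int_{-\infty}^{\infty} e^{-\frac{4\pi m}{v}y^2-4\pi w y}\,dy\right)dv.
\end{equation*}
The inner integral is a Gaussian: completing the square $-\frac{4\pi m}{v}y^2-4\pi w y=-\frac{4\pi m}{v}\bigl(y+\frac{wv}{2m}\bigr)^2+\frac{\pi w^2 v}{m}$ and using $\int_{\mathbb{R}}e^{-a y^2}dy=\sqrt{\pi/a}$ with $a=4\pi m/v$ gives $\sqrt{v/(4m)}\cdot e^{\pi w^2 v/m}$. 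Substituting back, the $v$-integral becomes $\frac{1}{2\sqrt{m}}\int_0^\infty v^{k-1/2}e^{-(4\pi\ell-\pi w^2/m)v}\,dv$; note $4\pi\ell-\pi w^2/m=\frac{\pi}{m}(4\ell m-w^2)>0$ by hypothesis, so this is a convergent Gamma integral equal to $\frac{1}{2\sqrt m}\,\Gamma(k+\tfrac12)\bigl(\frac{\pi}{m}(4\ell m-w^2)\bigr)^{-k-1/2}$. Collecting the constants yields exactly $\frac{m^{k}}{2\pi^{k+1/2}}\cdot\frac{\Gamma(k+1/2)}{(4\ell m-w^2)^{k+1/2}}$.

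I do not anticipate a genuine obstacle here; the only point requiring a little care is bookkeeping the fundamental domain for $\Gamma_\infty^J$ and justifying the interchange of the $dy$ and $dv$ integrations (Tonelli applies since the integrand is positive after we pull out the unimodular phases, or one simply notes absolute convergence from the Gaussian decay in $y$ and exponential decay in $v$). One should also record that the exponent condition $4\ell m-w^2>0$ is exactly what makes the $v$-integral converge, which is why the lemma is stated for that range. The rest is the elementary Gaussian-times-Gamma computation sketched above.
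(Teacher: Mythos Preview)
Your proposal is correct and follows essentially the same route as the paper: identify the fundamental domain for $\Gamma_\infty^J$ as $\{0\le u<1,\ v>0,\ 0\le x<1,\ y\in\mathbb{R}\}$, use the $u$- and $x$-integrals for orthogonality, evaluate the Gaussian in $y$ by completing the square, and finish with a Gamma integral in $v$. The only cosmetic difference is that the paper quotes the Gaussian integral $\int_{-\infty}^{\infty}e^{-4\pi(ry+my^{2}/v)}\,dy=\tfrac{\sqrt{v}}{2\sqrt{m}}e^{\pi r^{2}v/m}$ directly rather than deriving it, and omits the Tonelli remark; your added justification is harmless (and note that in the paper's definition $\Gamma_\infty^J$ has $\lambda=0$, so no change of variables is actually needed).
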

\begin{proof}
 Substituting $\tau = u+i v$, $z= x+iy$ and $dV_J = \frac{du dv dx dy}{v^{3}}$, we get
\begin{equation*}
\begin{split}
I & =  \int \limits_{\Gamma_{\infty}^J\setminus\mathbb{H}\times \mathbb{C}} q^{n} \zeta^{r} \overline{q^{\ell} \zeta^{w}}~ v^{k+2} e^{\frac{-4 \pi my^{2}}{v}}  dV_J \\
& =   \int_{0}^{1} \int_{0}^{\infty} \int_{0}^{1} \int_{-\infty}^{\infty} 
e^{-2\pi v(n+\ell)} e^{2\pi i(n- \ell)u} e^{-2\pi y(r+w)}e^{2\pi i(r-w)x}
v^{k+2}e^{\dfrac{-4\pi my^2}{v}} \frac{du dv dx dy}{v^{3}}.
\end{split}
\end{equation*}
Integrating over $x$ and $u$, the integral survives only if $ n= \ell$ and $r=w$ otherwise it vanishes. Thus,  we obtain
\begin{equation*}
\begin{split}
I & =  \int_{0}^{\infty} \!\!\!\! \int_{-\infty}^{\infty} 
e^{-4\pi \ell v}e^{-4\pi w y} v^{k+2}e^{\dfrac{-4\pi my^2}{v}} \frac{ dv dy}{v^{3}} =  \int_{0}^{\infty} \!\! \left[\int_{-\infty}^{\infty} 
e^{-4\pi \left(w y+\frac{my^2}{v} \right)} dy  \right] e^{-4\pi \ell v} v^{k+2} \frac{ dv}{v^{3}}.
\end{split}
\end{equation*}
Using the integral estimate $\displaystyle{\int_{-\infty}^{\infty}}e^{-4\pi\left(ry+\frac{my^{2}}{v}\right)}dy = \dfrac{\sqrt{v}}{2 \sqrt{m}}~ e^{\pi\dfrac{r^{2}v}{m}}$, we obtain  
\begin{equation*}
\begin{split}
I &  = \int_{0}^{\infty}  \!\!\!\! \dfrac{\sqrt{v}}{2 \sqrt{m}} e^{\pi\dfrac{w^{2}v}{m}} e^{-4\pi \ell v} v^{k+2} \frac{ dv}{v^{3}} 
=  \frac{1}{2 \sqrt{m}}  \int_{0}^{\infty} \!\! e^{-4\pi (\ell - \frac{w^{2}}{4m})  v} v^{k+\frac{1}{2}} \frac{ dv}{v} 
=\frac{m^{k}}{ 2 {\pi}^{k+\frac{1}{2}}} ~ \frac{\Gamma\left(k+\frac{1}{2}\right)}{(4\ell m-w^{2})^{k+\frac{1}{2}}}.
\end{split}
\end{equation*}
The last integral is obtained by substituting $t = 4 \pi(\ell- \frac{w^{2}}{4m})v$ and using the Gamma function $\Gamma(s) = \displaystyle{\int_0^{\infty} } e^{-t} t^{s} \frac{dt}{t}$ at $s = k+\frac{1}{2}$.
\end{proof}
	
\begin{lem}\label{jacobi-mainlemma}
	Suppose that $k > 4$ be an integer  and $\phi(\tau, z) \in J^{cusp}_{k+2, m}$ with $(n,r)$-th Fourier coefficient $c(n,r)$. Let $\ell \in  \mathbb{N}$ and $w \in  \mathbb{Z}$ such that $4 \ell m-w^{2}>0$. Then the integrals  
		\begin{equation*}
		\begin{split}
		& I_{1}=\!\!\sum_{\gamma\in\Gamma_{\infty}^J\setminus \Gamma^J} 
		\int\limits_{\Gamma^J\setminus\mathbb{H}\times \mathbb{C}}  
		\vt{\phi(\tau,z) \overline{ \left(\mathcal{L}_{m}- \frac{2m(2k-1)}{12}E_{2}\right) ({q^{\ell} \zeta^{w}}|_{k, m} \gamma)}v^{k+2}e^{\dfrac{-4\pi my^2}{v}}}  dV_J \\
\text{and} \quad 	& I_{2} =\!\!	\sum_{\gamma\in\Gamma_{\infty}^J\setminus \Gamma^J} 
		\!\int\limits_{\Gamma^J\setminus\mathbb{H}\times \mathbb{C}}  
		\vt{\phi(\tau,z) \overline{  \left(  D_{\tau}  - \frac{k}{12} E_{2}  - J_{1} D_{z}  + m J_{2}\right) ({q^{\ell} \zeta^{w}}|_{k, m} \gamma)}v^{k+2}e^{\dfrac{-4\pi my^2}{v}}}  dV_J
		\end{split}
		\end{equation*}
		converge absolutely. 
	\end{lem}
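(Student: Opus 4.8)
The plan is to unfold both sums and reduce to estimating single integrals over $\Gamma^{J}_{\infty}\setminus\H\times\C$. Put $T_{1}:=\mathcal{L}_{m}-\tfrac{m(2k-1)}{6}E_{2}$ and $T_{2}:=D_{\tau}-\tfrac{k}{12}E_{2}-J_{1}D_{z}+mJ_{2}$, the operators appearing in $I_{1}$ and $I_{2}$. By \lemref{StokeHeat} and \lemref{StokeOBD} these commute with the slash, $T_{j}\bigl((q^{\ell}\zeta^{w})|_{k,m}\gamma\bigr)=\bigl(T_{j}(q^{\ell}\zeta^{w})\bigr)|_{k+2,m}\gamma$. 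The integrands of $I_{1},I_{2}$ are non-negative, so the standard unfolding applies: the Petersson density $\rho(\tau,z):=|\phi(\tau,z)|\,v^{(k+2)/2}e^{-2\pi m y^{2}/v}$ is $\Gamma^{J}$-invariant, for any function $\psi$ one has $\bigl|(\psi|_{k+2,m}\gamma)(\tau,z)\bigr|\,v^{(k+2)/2}e^{-2\pi m y^{2}/v}=\bigl|\psi(\gamma\!\cdot\!(\tau,z))\bigr|(\im\gamma\tau)^{(k+2)/2}e^{-2\pi m(\im\gamma z)^{2}/\im\gamma\tau}$, and $dV_{J}$ is invariant, so by Tonelli and the usual tiling argument
\[
I_{j}=\int_{\Gamma^{J}_{\infty}\setminus\H\times\C}|\phi(\tau,z)|\cdot\bigl|T_{j}(q^{\ell}\zeta^{w})(\tau,z)\bigr|\,v^{k+2}e^{-4\pi m y^{2}/v}\,dV_{J},
\]
and it remains to show each of these is finite.

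Next I would make the operators explicit on the monomial: $T_{1}(q^{\ell}\zeta^{w})=\bigl((4\ell m-w^{2})-\tfrac{m(2k-1)}{6}E_{2}(\tau)\bigr)q^{\ell}\zeta^{w}$ and $T_{2}(q^{\ell}\zeta^{w})=\bigl(\ell-\tfrac{k}{12}E_{2}(\tau)-wJ_{1}(\tau,z)+mJ_{2}(\tau,z)\bigr)q^{\ell}\zeta^{w}$, and on the explicit fundamental domain $\{0\le u<1,\,v>0,\,0\le x<1,\,y\in\R\}$ for $\Gamma^{J}_{\infty}$ insert the standard bound $|\phi(\tau,z)|\ll v^{-(k+2)/2}e^{2\pi m y^{2}/v}$ (boundedness of $\rho$ for a cusp form). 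The powers of $v$ then collapse; performing the trivial $x$-integration and the Gaussian integral $\int_{\R}e^{-2\pi w y-2\pi m y^{2}/v}\,dy=\sqrt{v/2m}\,e^{\pi w^{2}v/2m}$ (completing the square, as in the proof of \lemref{IntegralApprox}) one finds that $I_{1}$ is bounded by a constant times
\[
\int_{0}^{\infty}\Bigl(\int_{0}^{1}\bigl|(4\ell m-w^{2})-\tfrac{m(2k-1)}{6}E_{2}(u+iv)\bigr|\,du\Bigr)v^{(k-3)/2}e^{-\frac{\pi}{2m}(4\ell m-w^{2})v}\,dv,
\]
and $I_{2}$ by the analogue with $\int_{0}^{1}du$ replaced by an average of $\bigl|\ell-\tfrac{k}{12}E_{2}-wJ_{1}+mJ_{2}\bigr|$ over $(u,x)\in[0,1)^{2}$ (the variable $z$ persisting through $J_{1},J_{2}$).

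The heart of the proof is the convergence of this one-dimensional integral, where $k>4$ is used. Near $v=\infty$ the quasimodular forms are bounded and $e^{-\frac{\pi}{2m}(4\ell m-w^{2})v}$---valid since $4\ell m-w^{2}>0$---gives rapid decay, so that tail is harmless. Near $v=0$ one cannot bound $E_{2}$ pointwise ($|E_{2}(u+iv)|$ is of size $v^{-2}$ near rationals), but its horocycle average is much smaller: with $E_{2}=E_{2}^{*}+\tfrac{3}{\pi v}$ and $E_{2}^{*}$ genuinely modular of weight $2$, the function $v\,|E_{2}^{*}(\tau)|$ is $\sl_{2}(\Z)$-invariant and grows at most linearly in $\im\tau$ on the standard fundamental domain, whence $|E_{2}(\tau)|\ll v^{-1}(1+\operatorname{height}(\tau))$; combined with the Farey-type bound $\int_{0}^{1}\operatorname{height}(u+iv)\,du\ll\log(2/v)$ (from $\operatorname{height}(u+iv)\le1/v$ and $\operatorname{meas}\{u:\operatorname{height}(u+iv)>T\}\ll1/T$) this gives $\int_{0}^{1}|E_{2}(u+iv)|\,du\ll v^{-1}\log(2/v)$. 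Hence the $I_{1}$-integral is dominated by $\int_{0}^{1}v^{(k-5)/2}\log(2/v)\,dv$ plus a finite tail, which converges for $k>3$, in particular $k>4$. For $I_{2}$ one needs the same quality of bound for $J_{1},J_{2}$: their transformation laws \eqref{eq:trans law of J_1}--\eqref{eq:trans law of J_2} make $v^{1/2}J_{1}$ and $vJ_{2}$ (away from their poles) controllable by powers of $\operatorname{height}(\tau)$, while the poles of $J_{1}$ along $z\in\Z\tau+\Z$ are simple, hence integrable against $dx\,dy$ (alternatively, one can use the $q$-expansions \eqref{TwistESeries} of $J_{1},J_{2}$ and absorb the resulting $q$-power shifts into $e^{-\frac{\pi}{2m}(4\ell m-w^{2})v}$). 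I expect this control of the quasimodular factor $E_{2}$---and of the meromorphic factors $J_{1},J_{2}$---near the cusp to be the main obstacle; the remaining steps are the change of variables and Gamma-integral already used for \lemref{IntegralApprox}.
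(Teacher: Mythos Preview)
Your unfolding step agrees with the paper's: both invoke \lemref{StokeHeat}/\lemref{StokeOBD} and the $\Gamma^{J}$-invariance of the Petersson density to rewrite $I_{j}$ as a single integral over $\Gamma^{J}_{\infty}\setminus\H\times\C$.

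After that the two arguments diverge. The paper does \emph{not} bound $|\phi|$ pointwise. It inserts the Fourier expansions of $\phi$ and of $E_{2}$, bounds term by term, evaluates the resulting elementary integrals with \lemref{IntegralApprox}, and then checks that the remaining series over $p$ converges using the cusp-form coefficient estimate $c(n,r)\ll(4mn-r^{2})^{(k+1)/2}$ from \lemref{Jacobi-convergence}; this is exactly where $k>4$ is used. Your route for $I_{1}$---sup-norm bound on $\phi$ followed by the horocycle estimate $\int_{0}^{1}|E_{2}(u+iv)|\,du\ll v^{-1}\log(2/v)$---is a legitimate alternative, but it imports an equidistribution-type input that the paper avoids entirely; the paper's method stays within the Fourier-analytic toolkit already set up (Lemmas~\ref{Jacobi-convergence} and \ref{IntegralApprox}).

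For $I_{2}$ your primary reduction has a real gap. You write that after the Gaussian $y$-integral one is left with a $(u,x)$-average of $|\ell-\tfrac{k}{12}E_{2}-wJ_{1}+mJ_{2}|$, but $J_{1}(\tau,z)$ and $J_{2}(\tau,z)$ depend on $y=\Im z$; by the elliptic transformation law $J_{1}(\tau,z+\tau)=J_{1}(\tau,z)-1$ one has $|J_{1}(\tau,z)|\asymp|y|/v$ for large $|y|$, so the $y$-integral is not the clean Gaussian you claim. The fix is precisely your parenthetical ``alternative'': expand $J_{1},J_{2}$ via \eqref{TwistESeries} and absorb the resulting $q$-power shifts, which is what the paper (implicitly) does when it says ``Similarly, the absolute convergence of $I_{2}$ can be easily seen.'' Carrying this out requires the same term-by-term estimate as for $I_{1}$, together with a separate treatment of the pole term $\zeta/(\zeta-1)$; the paper performs that splitting (at $y=0$, leading to the error-function integrals) only later, in the proof of \thmref{ADOBD}.
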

	
	\begin{proof}
	Let us consider the first integral $I_{1}$. Changing the variable $(\tau, z)$ to $\gamma^{-1}.(\tau, z)$ and using \eqref{StokeHeatAction}, the sum equals to
		\begin{equation*}
		I_{1}= \sum_{\gamma\in\Gamma_{\infty}^J\setminus \Gamma^J} 
		\int\limits_{\gamma.\Gamma^J\setminus\mathbb{H}\times \mathbb{C}}  
		\vt{ \phi(\tau,z) \overline{  \left(\mathcal{L}_{m}- \frac{2m(2k-1)}{12}E_{2}\right) ({q^{\ell} \zeta^{w}})|_{k+2, m} \gamma}~v^{k+2}e^{\dfrac{-4\pi my^2}{v}}} dV_J.
		\end{equation*}
		Now, using Rankin unfolding argument, we see that
		$$
	I_{1}=	\int\limits_{\Gamma^J_{\infty}\setminus\mathbb{H}\times \mathbb{C}} \vt{ \phi(\tau,z) \overline{ \left(\mathcal{L}_{m}- \frac{2m(2k-1)}{12}E_{2}\right) ({q^{\ell} \zeta^{w}})|_{k+2, m} \gamma}~v^{k+2}e^{\dfrac{-4\pi my^2}{v}}} dV_J.
		$$
		Now, replacing $\phi(\tau,z)$ and $E_{2}(\tau)$ with their Fourier expansions and the action of the operator $\mathcal{L}_{m}$ on ${q^{n} \zeta^{r}}$,  the last integral is majorized by 
		\begin{equation*}
		\begin{split}
		S & = \sum\limits_{\substack{n \ge 1, r \in\mathbb{Z}\\ 4nm-r^{2}>0}} c(n,r) \left((4nm-r^{2})- \frac{2m(2k-1)}{12}\right)  \int\limits_{\Gamma^J_{\infty}\setminus\mathbb{H}\times \mathbb{C}} {q^{n} \zeta^{r}}  \overline{{q^{\ell} \zeta^{w}}}~v^{k+2}e^{\dfrac{-4\pi my^2}{v}} dV_J \\
		& \quad + ~ 4m(2k-1) \sum\limits_{\substack{n \ge 1, r \in\mathbb{Z}\\ 4nm-r^{2}>0}} c(n,r) \displaystyle{\sum_{p=1}^{\infty} \sigma_{1}(p)} \int\limits_{\Gamma^J_{\infty}\setminus\mathbb{H}\times \mathbb{C}} {q^{n} \zeta^{r}}  \overline{{q^{\ell + p} \zeta^{w}}}~v^{k+2}e^{\dfrac{-4\pi my^2}{v}} dV_J.
		\end{split}
		\end{equation*}
		Using \lemref{IntegralApprox}, we have 
		\begin{equation*}
		\begin{split}
		S & =  c(\ell,w) \left((4\ell m-w^{2})- \frac{2m(2k-1)}{12}\right)   \frac{m^{k}}{ 2 {\pi}^{k+\frac{1}{2}}} ~ \frac{\Gamma\left(k+\frac{1}{2}\right)}{(4\ell m-w^{2})^{k+\frac{1}{2}}}  \\
		& \quad + ~ 4m(2k-1)  \displaystyle{\sum_{p=1}^{\infty}   c(\ell+p,w) \sigma_{1}(p)}  \frac{m^{k}}{ 2 {\pi}^{k+\frac{1}{2}}} ~ \frac{\Gamma\left(k+\frac{1}{2}\right)}{(4(\ell+p) m-w^{2})^{k+\frac{1}{2}}}.
		\end{split}
		\end{equation*}
		\begin{equation*}
		\begin{split}
		S & =   \frac{m^{k}{\Gamma\left(k+\frac{1}{2}\right)}}{ 2 {\pi}^{k+\frac{1}{2}}} \bigg[ 
		  \frac{c(\ell,w)}{(4\ell m-w^{2})^{k+\frac{1}{2}}} \left((4\ell m-w^{2})- \frac{2m(2k-1)}{12}\right)  \\ & \hspace{3.5 cm}
		 +  4m(2k-1)  \displaystyle{\sum_{p=1}^{\infty}    \sigma_{1}(p)}  \frac{ c(\ell+p,w)}{(4(\ell+p) m-w^{2})^{k+\frac{1}{2}}}  \bigg].
		\end{split}
		\end{equation*}
Using the growth of the Fourier coefficients given in \lemref{Jacobi-convergence}, the above series converges absolutely for $k>4$. Similarly, the absolute convergence of $I_{2}$ can be easily seen.  This completes  the proof. 
	\end{proof}
	
\section{Proof of Results}
	\noindent
	\textbf{Proof of \thmref{ADLM}:}
Let $\phi(\tau, z) \in J^{cusp}_{k+2, m}$ with $(n,r)$-th  Fourier coefficient $c(n,r)$ given in  the Fourier expansion \eqref{FJExp}.
 Let $\mathcal{L}^{*}_{k,m}$ denote the adjoint of $\mathcal{L}_{k,m}$.  Let the Fourier expansion of   $\mathcal{L}_{k,m}^*\phi$ is given by
$$
	(\mathcal{L}_{k,m}^*\phi) (\tau, z)=  \sum_{\substack {\ell \ge 1, w \in \mathbb{Z}, \\ 4m \ell - w^{2} > 0}} c_{\mathcal{A}}(\ell, w) q^{\ell} \zeta^{w}. 
	$$
	Now, we consider the $(\ell, w)$-th Poinca{\'r}e series of weight $k$ and index $m$ as given in \eqref{jacobi-poincare}. Using \lemref{jacobi-poincare-lemma}, we have 
	$$
	\langle \mathcal{L}_{k,m}^*\phi,  P_{k,m;(\ell,w)}\rangle=  \dfrac{m^{k-2}\Gamma(k-\frac{3}{2})}{2 \pi^{k-\frac{3}{2}}} (4m \ell -w^2)^{-k+\frac{3}{2}} c_{\mathcal{A}}(\ell,w).
	$$
	On the other hand, by definition of the adjoint map we have
	$$
	\langle  \mathcal{L}_{k,m}^*\phi,  P_{k,m;(\ell,w)}\rangle =  \pangle {\phi,   \left( \mathcal{L}_{m}- \frac{m(2k-1)}{6}E_{2} \right) P_{k,m;(\ell,w)} }.
	$$
	Hence, we get 
	\begin{equation}\label{AJacobiCoeff}
	c_{\mathcal{A}}(\ell,w) = \dfrac {2 \pi^{k-\frac{3}{2}}}{m^{k-2}\Gamma(k-\frac{3}{2})} {(4m \ell-w^{2})^{k-\frac{3}{2}}} ~~  \pangle{ \phi,   \left( \mathcal{L}_{m}- \frac{m(2k-1)}{6}E_{2} \right) P_{k,m;(\ell,w)}}.
	\end{equation}
Let $S_{1}$ denote the Petersson Scalar product 	$\pangle{ \phi,   \left( \mathcal{L}_{m}- \frac{m(2k-1)}{6}E_{2} \right) P_{k,m;(\ell,w)}}$. By definition of  Petersson Scalar product, we have
	\begin{equation*}
	S_{1} = \int\limits_{\Gamma^J\setminus\mathbb{H}\times \mathbb{C}}  
	\phi(\tau,z) \overline{  \left(\mathcal{L}_{m}- \frac{m(2k-1)}{6}E_{2}\right)   \sum_{\gamma\in\Gamma_{\infty}^J\setminus \Gamma^J} {q^{\ell} \zeta^{w}}|_{k, m} \gamma  }~v^{k+2}e^{\dfrac{-4\pi my^2}{v}} dV_J.
	\end{equation*}
   Using \eqref{StokeHeatAction}, and changing the order of integration and summation,   we get
	\begin{equation*}
	S_{1} = \sum_{\gamma\in\Gamma_{\infty}^J\setminus \Gamma^J} 
	\int\limits_{\Gamma^J\setminus\mathbb{H}\times \mathbb{C}}  \phi(\tau,z) \overline{  \left(\mathcal{L}_{m}- \frac{m(2k-1)}{6}E_{2}\right) {q^{\ell} \zeta^{w}}|_{k+2, m} \gamma}~v^{k+2}e^{\dfrac{-4\pi my^2}{v}}  dV_J.
	\end{equation*}
	Changing the variable $(\tau, z)$ to $\gamma^{-1}.(\tau, z)$ gives rise to 
	\begin{equation*}
	S_{1}=  \sum_{\gamma\in\Gamma_{\infty}^J\setminus \Gamma^J} 
	\int\limits_{\gamma.\Gamma^J\setminus\mathbb{H}\times \mathbb{C}}  \phi(\tau,z) \overline{  \left(\mathcal{L}_{m}- \frac{m(2k-1)}{6}E_{2}\right) {q^{\ell} \zeta^{w}}}~v^{k+2}e^{\dfrac{-4\pi my^2}{v}}  dV_J. 
	\end{equation*}
	On applying the Rankin's unfolding argument, we get
	$$
	S_{1}= \int\limits_{\Gamma^J_{\infty}\setminus\mathbb{H}\times \mathbb{C}}  \phi(\tau,z) \overline{  \left(\mathcal{L}_{m}- \frac{m(2k-1)}{6}E_{2}\right) {q^{\ell} \zeta^{w}}}~v^{k+2}e^{\dfrac{-4\pi my^2}{v}} dV_J.
	$$
	Now, replacing $\phi(\tau,z)$ and $E_{2}(\tau)$ with their the Fourier expansions given in \eqref{FJExp} and \eqref{E2Exp}, respectively, and applying  the action of the operator $\mathcal{L}_{m}$ on ${q^{n} \zeta^{r}}$,  the last integral is given by 
	\begin{equation*}
	\begin{split}
	S_{1} & = \sum\limits_{\substack{n \ge 1, r \in\mathbb{Z}\\ 4nm-r^{2}>0}} c(n,r) \left((4nm-r^{2})- \frac{m(2k-1)}{6}\right)  \int\limits_{\Gamma^J_{\infty}\setminus\mathbb{H}\times \mathbb{C}} {q^{n} \zeta^{r}}  \overline{{q^{\ell} \zeta^{w}}}~v^{k+2}e^{\dfrac{-4\pi my^2}{v}} dV_J \\
	& \quad + ~ 4m(2k-1) \sum\limits_{\substack{n \ge 1, r \in\mathbb{Z}\\ 4nm-r^{2}>0}} c(n,r) \displaystyle{\sum_{p=1}^{\infty} \sigma_{1}(p)} \int\limits_{\Gamma^J_{\infty}\setminus\mathbb{H}\times \mathbb{C}} {q^{n} \zeta^{r}}  \overline{{q^{\ell + p} \zeta^{w}}}~v^{k+2}e^{\dfrac{-4\pi my^2}{v}} dV_J.
	\end{split}
	\end{equation*}
	Using \lemref{IntegralApprox} for the integrals, we have 
	\begin{equation}\label{SEvaluation}
	\begin{split}
	S_{1} & =   \frac{m^{k}{\Gamma\left(k+\frac{1}{2}\right)}}{ 2 {\pi}^{k+\frac{1}{2}}}  \bigg[  \frac{c(\ell,w)}{(4\ell m-w^{2})^{k+\frac{1}{2}}} \left((4\ell m-w^{2})- \frac{m(2k-1)}{6}\right)   \\
	& \hspace{3.0 cm}
	+ ~ 4m(2k-1)  \displaystyle{\sum_{p=1}^{\infty}    \sigma_{1}(p)} ~ \frac{ c(\ell+p,w)}{(4(\ell+p) m-w^{2})^{k+\frac{1}{2}}} \bigg].
	\end{split}
	\end{equation}
	From \eqref{AJacobiCoeff} and \eqref{SEvaluation}, we obtain the required result.
This completes the proof.
	
\smallskip	
\noindent
\textbf{Proof of \thmref{ADOBD}:} 
Let $\phi(\tau, z) \in J^{cusp}_{k+2, m}$ with $(n,r)$-th  Fourier coefficient $c(n,r)$ given in  the Fourier expansion \eqref{FJExp}.  Let $\mathcal{O}^{*}$ denote the adjoint of $\mathcal{O}$.  Let the Fourier expansion of   $\mathcal{O}^{*}\phi$ is given by
$$
\mathcal{O}^{*}\phi(\tau, z)=  \sum_{\substack {\ell \ge 1, w \in \mathbb{Z}, \\ 4m \ell - w^{2} > 0}} c_{\mathcal{B}}(\ell, w) q^{\ell} \zeta^{w}. 
$$
	Now, we consider the $(\ell, w)$-th Poinca{\'r}e series of weight $k$ and index $m$ as given in \eqref{jacobi-poincare}. Using \lemref{jacobi-poincare-lemma}, we have 
$$
\pangle{\mathcal{O}^{*}\phi,  P_{k,m;(\ell,w)}}= \dfrac{m^{k-2}\Gamma(k-\frac{3}{2})}{2 \pi^{k-\frac{3}{2}}}  (4m \ell -w^2)^{-k+\frac{3}{2}} c_{\mathcal{B}}(\ell,w),
$$
By definition of the adjoint map, we get 
\begin{equation}\label{BJacobiCoeff}
 c_{\mathcal{B}}(\ell,w) = \dfrac {2 \pi^{k-\frac{3}{2}} {(4m \ell-w^{2})^{k-\frac{3}{2}}}}{m^{k-2}\Gamma(k-\frac{3}{2})}\pangle{\phi,   \left( D_{\tau}  - \frac{k}{12} E_{2}  - J_{1} D_{z}  + m J_{2} \right) P_{k,m;(\ell,w)}}.
\end{equation}
Let $T$ denote the Petersson scalar product $\pangle{\phi,   \left( D_{\tau}  - \frac{k}{12} E_{2}  - J_{1} D_{z}  + m J_{2} \right) P_{k,m;(\ell,w)}}$. Then,
\begin{equation*}
	T = \int\limits_{\Gamma^J\setminus\mathbb{H}\times \mathbb{C}}  
	\phi(\tau,z) \overline{  \left(  D_{\tau}  - \frac{k}{12} E_{2}  - J_{1} D_{z}  + m J_{2} \right)   \sum_{\gamma\in\Gamma_{\infty}^J\setminus \Gamma^J} {q^{\ell} \zeta^{w}}|_{k, m} \gamma }~v^{k+2}e^{\dfrac{-4\pi my^2}{v}} dV_J. 
\end{equation*}
	Now, using  similar to the arguments as done in the proof of   \thmref{ADLM}, we get 
	\begin{equation}\label{integral exp of T}
	T= \int\limits_{\Gamma^J_{\infty}\setminus\mathbb{H}\times \mathbb{C}}  \phi(\tau,z) \overline{ \left(  D_{\tau}  - \frac{k}{12} E_{2}  - J_{1} D_{z}  + m J_{2} \right)  {q^{\ell} \zeta^{w}}}~v^{k+2}e^{\dfrac{-4\pi my^2}{v}} dV_J.
	\end{equation}
	
	Now, replacing  $J_{1}(\tau,z)$, $J_{2}(\tau,z)$ and $E_{2}(\tau)$ with their Fourier expansions given in \eqref{TwistESeries} and  \eqref{E2Exp}, and applying  the action of $D_{\tau}$ and $D_{z}$ on the monomial ${q^{n} \zeta^{r}}$, we get  
\begin{equation}\label{ObedExp}
	\begin{split}
& \left(  D_{\tau}  - \frac{k}{12} E_{2}  - J_{1} D_{z}  + m J_{2} \right)  {q^{\ell} \zeta^{w}} \\
& \quad  \quad  = \left( \ell - \frac{k}{12} + \frac{w}{2} +\frac{m}{6} \right) {q^{\ell} \zeta^{w}} + 2k \sum_{p \ge 1} \sigma_{1}(p) {q^{\ell+p} \zeta^{w}} - w {q^{\ell} \frac{\zeta^{w+1}}{\z-1}} \\
& \quad  \qquad + w \sum_{p \ge 1} \left( \sum_{d \vert p}(\zeta^{d+w} - \zeta^{-d+w}) \right) {q^{\ell+p}} 
  -2m \sum_{p \ge 1} \left(\sum_{d \vert p}\frac{p}{d} (\zeta^{d+w} + \zeta^{-d+w}) \right) {q^{\ell+p}}.
\end{split}
\end{equation}
We substitute the  Fourier expansion of  $\phi(\tau,z)$ (given in \eqref{FJExp}) and series expansion \eqref{ObedExp} in \eqref{integral exp of T}, we have 
\begin{equation}\label{TExpression}
	\begin{split}
	T =  (I) + (II) + (III) + (IV) +(V) 
	\end{split}
	\end{equation}
where
\begin{equation*}
	\begin{split}
	  (I) &  = \left( \ell - \frac{k}{12} + \frac{w}{2} +\frac{m}{6} \right)  \sum\limits_{\substack{n \ge 1, r \in\mathbb{Z}\\ 4nm-r^{2}>0}} c(n,r) \int\limits_{\Gamma^J_{\infty}\setminus\mathbb{H}\times \mathbb{C}} {q^{n} \zeta^{r}}  \overline{{q^{\ell + p} \zeta^{w}}}~v^{k+2}e^{\dfrac{-4\pi my^2}{v}} dV_J. \\
	   (II)  &  =  2 k ~ \sum\limits_{\substack{n \ge 1, r \in\mathbb{Z}\\ 4nm-r^{2}>0}} c(n,r) \displaystyle{\sum_{p=1}^{\infty} \sigma_{1}(p)} \int\limits_{\Gamma^J_{\infty}\setminus\mathbb{H}\times \mathbb{C}} {q^{n} \zeta^{r}}  \overline{{q^{\ell + p} \zeta^{w}}}~v^{k+2}e^{\dfrac{-4\pi my^2}{v}} dV_J.  \\  
     (III)  &  = -w \sum\limits_{\substack{n \ge 1, r \in\mathbb{Z}\\ 4nm-r^{2}>0}} c(n,r)   \int \limits_{\Gamma_{\infty}^J\setminus\mathbb{H}\times \mathbb{C}}
q^{n} \zeta^{r} \frac{\overline{q^{\ell} \zeta^{w+1}}}{\overline{\z}-1}~ v^{k+2} e^{\frac{-4 \pi my^{2}}{v}}  dV_J. \\  
\end{split}
	\end{equation*}
\begin{equation*}
	\begin{split}
	    (IV)  &  =   w \sum_{p \ge 1}  \sum_{d \vert p}   \sum\limits_{\substack{n \ge 1, r \in\mathbb{Z}\\ 4nm-r^{2}>0}} c(n,r)  \!\!\!\! \int\limits_{\Gamma^J_{\infty}\setminus\mathbb{H}\times \mathbb{C}}  \!\!\!\! {q^{n} \zeta^{r}}  \overline{{q^{\ell + p} ( \zeta^{w+d}-  \zeta^{-w+d} )}}~v^{k+2}e^{\dfrac{-4\pi my^2}{v}} dV_J. \\ 
	    (V) & =   -2m   \sum_{p \ge 1} \sum_{d \vert p} \frac{p}{d}  \sum\limits_{\substack{n \ge 1, r \in\mathbb{Z}\\ 4nm-r^{2}>0}}   \!\!\!\! c(n,r)   \!\!\!\!  \!\!  \int\limits_{\Gamma^J_{\infty}\setminus\mathbb{H}\times \mathbb{C}}  \!\!\!\!   \!\!  {q^{n} \zeta^{r}}  \overline{{q^{\ell + p} ( \zeta^{w+d}+  \zeta^{-w+d} )}}~v^{k+2}e^{\dfrac{-4\pi my^2}{v}} dV_J.
	\end{split}
	\end{equation*}
Let $ \beta_{k,m}:= \frac{m^{k}{\Gamma\left(k+\frac{1}{2}\right)}}{ 2 {\pi}^{k+\frac{1}{2}}}$. Now, we   apply \lemref{IntegralApprox} to get 
\begin{equation*}
	\begin{split}
	(I) &=  \beta_{k,m} ~~A_1,\quad 
	(II) =2 k ~ \beta_{k,m}~~ A_2, \quad 
	(IV) =  w~  \beta_{k,m}~~A_4 \quad \text{and} \quad 
	(V) =  -2m~  \beta_{k,m}~~ ~~A_5.
\end{split}
\end{equation*}
where $A_i$'s are given in \thmref{ADOBD} 
Now, we trace the integral appearing in $(III)$.   
\begin{equation}\label{eq:III}
	\begin{split}
	(III) 
	& =-w  \sum\limits_{\substack{n \ge 1, r \in\mathbb{Z}\\ 4nm-r^{2}>0}} c(n,r)  \int \limits_{\Gamma_{\infty}^J\setminus\mathbb{H}\times \mathbb{C}}
	q^{n} \zeta^{r} \frac{\overline{q^{\ell} \zeta^{w+1}}}{\overline{\z}-1}~ v^{k+2} e^{\frac{-4 \pi my^{2}}{v}}  dV_J \\
	& = -w  \sum\limits_{\substack{n \ge 1, r \in\mathbb{Z}\\ 4nm-r^{2}>0}} c(n,r)  \int_{0}^{1} \int_{0}^{\infty} \int_{0}^{1} \int_{-\infty}^{\infty}  
	q^{n} \zeta^{r} \frac{\overline{q^{\ell} \zeta^{w+1}}}{\overline{\z}-1}~ v^{k+2} e^{\frac{-4 \pi my^{2}}{v}}
	\frac{du dv dx dy}{v^{3}}.
	\end{split}
	\end{equation}
	We have that 
	\begin{equation}\label{ZetaExp}
		\frac{\z}{\z-1}=
		\begin{cases}
		&-\sum_{p\ge 0}
		\z^{p+1}\; \;\text{if}\;\; 0<\im(\z)<\infty,\\ \\
		&\sum_{p\ge 0}
		\z^{-p}\;\; \;\;\;\;\;\text{if} \;-\infty<\im(\z)<0.
		\end{cases}
	\end{equation}
	Using \eqref{eq:III} and \eqref{ZetaExp}, we get
	\begin{equation*}
	\begin{split}
     (III) 
      & =w  \sum\limits_{\substack{n \ge 1, r \in\mathbb{Z}, p\ge 0\\ 4nm-r^{2}>0}} c(n,r)  \int_{0}^{1} \int_{0}^{\infty} \int_{0}^{1} \int_{0}^{\infty} q^n\z^{r}\overline{q^{\ell} \zeta^{w+p+1}}
      v^{k+2}e^{\dfrac{-4\pi my^2}{v}} \frac{du dv dx dy}{v^{3}}\\
      &\quad -w  \sum\limits_{\substack{n \ge 1, r \in\mathbb{Z}, p\ge 0\\ 4nm-r^{2}>0}} c(n,r) \int_{0}^{1} \int_{0}^{\infty} \int_{0}^{1} \int_{-\infty}^{0}  q^n\z^{r}\overline{q^{\ell} \zeta^{w-p}}
      v^{k+2}e^{\dfrac{-4\pi my^2}{v}} \frac{du dv dx dy}{v^{3}}.\\
	\end{split}
	\end{equation*}
Substituting $\tau = u+ iv$ and $z= x+iy$ and integrating over $u$ and $x$, we get
	\begin{align*}
		(III)=
		&w\sum_{p\ge 0}
		c(\ell, w+p+1)
		\int_{0}^{\infty} \int_{0}^{\infty}
		e^{-4\pi v\ell} 
	    e^{-4\pi y(w+p+1)}
	    v^{k+2}e^{\dfrac{-4\pi my^2}{v}} 
	    \frac{ dv  dy}{v^{3}}\\
	    &
	   - w\sum_{p\ge 0}
	    c(\ell, w-p)
	    \int_{0}^{\infty} \int_{-\infty}^{0}
	    e^{-4\pi v\ell} 
	    e^{-4\pi y(w-p)}
	    v^{k+2}e^{\dfrac{-4\pi my^2}{v}} 
	    \frac{ dv  dy}{v^{3}}
	\end{align*}
Substituting  $y = \sqrt{\frac{v}{m}}\big( X- \frac{\sqrt{v}}{2 \sqrt{m}} (w+p+1) \big)$ in the first part
	and $y=\sqrt{\frac{v}{m}}\big( X- \frac{\sqrt{v}}{2 \sqrt{m}} (w-p) \big)$ and then replace $X$ by $-X$ in the  second part of the RHS of  (III) above, we get
	\begin{align*}
		(III)=
		&\frac{w}{\sqrt{m}}\sum_{p\ge 0}
		c(\ell, w+p+1)
		\int_{0}^{\infty} 
		e^{-4\pi v \lp \ell -\frac{(w+p+1)^2}{4m} \rp} 
		v^{k-\frac{1}{2}}
		\lp
		\int_{\frac{1}{2}\sqrt{\frac{v}{m}}(w+p+1) }^{\infty}
		e^{-4\pi X^2}
		  dX \rp
		  dv
		  \\
		&	- \frac{w}{\sqrt{m}}
		\sum_{p\ge 0}
		c(\ell, p-w)
		\int_{0}^{\infty} 
		e^{-4\pi v \lp \ell -\frac{(p-w)^2}{4m} \rp} 
		v^{k-\frac{1}{2}}
		\lp
		\int_{\frac{1}{2}\sqrt{\frac{v}{m}}(p-w) }^{\infty} 
		e^{-4\pi X^2}
		dX
    	\rp
	    dv\\
	    &=\frac{w}{4\sqrt{m}}\sum_{p\ge 0}
	    c(\ell, w+p+1)
	    \int_{0}^{\infty} 
	    e^{-4\pi v \lp \ell -\frac{(w+p+1)^2}{4m} \rp} 
	    v^{k-\frac{1}{2}}
	    \lp
	    1-\err\bigg(  
	    \sqrt{\frac{\pi v}{m}}(w+p+1)\bigg) 
	    \rp
	    dv
	    \\
	    &	- \frac{w}{4\sqrt{m}}
	    \sum_{p\ge 0}
	    c(\ell, p-w)
	    \int_{0}^{\infty} 
	    e^{-4\pi v \lp \ell -\frac{(p-w)^2}{4m} \rp} 
	    v^{k-\frac{1}{2}}
	    \lp
	    1-\err\bigg(\sqrt{\frac{\pi v}{m}}(p-w)\bigg) 
	    \rp
	    dv\\
	    & = (III)_{1} + (III)_{2}~ (\text{say}).
	     \end{align*}
	     where $ \err(a)$ denotes the error function given by
	\[
	   \err(a)=
	   \frac{2}{\sqrt{\pi}}
	   \int_{0}^{a}
	   e^{-t^2} dt 
	\]
	with $\err(0)=1$ and $\err(\infty)=1.$
Now, we use the following integral: 
\begin{equation}\label{ErrInt}
\begin{split}
 \int_{0}^{\infty}  v^{k-\frac{1}{2}} ~ e^{-4\pi vM}  \err\bigg(\sqrt{\frac{\pi v}{m}}
 \bigg) dv 
 &   = \frac{2}{\sqrt{m}}\frac{\Gamma(k+1)}{{(4\pi M)}^{k+1}} ~~ {}_{2}F_{1}\left(\frac{1}{2}, k+1; \frac{3}{2}; -\frac{1}{4mM}\right)  \\
\end{split}
\end{equation}
for $k>-1$ and $m, M >0$ to get $(III)_{1}$ and  $(III)_{2}$ . The integral in \eqref{ErrInt} is obtained using Mathematica software. Thus, we get   
\begin{align*}
		(III)_{1}  
		           &  = \frac{w}{4\sqrt{m}}\sum_{p\ge 0} c(\ell, w+p+1)
	                       \int_{0}^{\infty}  e^{-4\pi v \lp \ell -\frac{(w+p+1)^2}{4m} \rp} v^{k-\frac{1}{2}}
	                        \lp 1-\err\bigg(  \sqrt{\frac{\pi v}{m}}(w+p+1)\bigg)  \rp dv  \\ 
	                    & = \frac{w~ m^{k}\Ga(k+\frac{1}{2})}{4~ \pi^{k+\frac{1}{2}}} \sum_{p\ge 0} \frac{c(\ell, w+p+1)}{((4m\ell-(w+p+1)^2))^{k+\frac{1}{2}}} \\
	                    & \quad \times  \bigg[1- \frac{2 \Gamma(k+1)}{\sqrt{\pi}\Gamma(k+1/2)}  
	                    \frac{(w+p+1)}{((4m\ell-(w+p+1)^2))^{\frac{1}{2}}} {}_{2}F_{1}\left(\frac{1}{2}, k+1; \frac{3}{2}; \frac{-(p+w+1)^{2}}{4 \ell ~m - (p+w+1)^{2}}\right)  \bigg].                 
\end{align*}
Similarly, we have 
\begin{align*}
		(III)_{2}  
		           &  = -\frac{w}{4\sqrt{m}}\sum_{p\ge 0} c(\ell, p-w)
	                       \int_{0}^{\infty}  e^{-4\pi v \lp \ell -\frac{(p-w)^2}{4m} \rp} v^{k-\frac{1}{2}}
	                        \lp 1-\err\bigg(  \sqrt{\frac{\pi v}{m}}(p-w)\bigg)  \rp dv  \\ 
	                    & = -\frac{w~ m^{k}\Ga(k+\frac{1}{2})}{4~ \pi^{k+\frac{1}{2}}} \sum_{p\ge 0} \frac{c(\ell, p-w)}{((4m\ell-(p-w)^2))^{k+\frac{1}{2}}} \\
	                    & \quad \times  \bigg[1- \frac{2 \Gamma(k+1)}{\sqrt{\pi}\Gamma(k+1/2)}  
	                    \frac{(p-w)}{((4m\ell-(p-w)^2))^{\frac{1}{2}}} {}_{2}F_{1}\left(\frac{1}{2}, k+1; \frac{3}{2}; \frac{-(p-w)^{2}}{4 \ell ~m - (p-w)^{2}}\right)  \bigg].                 
\end{align*}
Now combining $(I)-(V)$ and using \eqref{BJacobiCoeff}, we get the $(\ell, w)$-th Fourier coefficient of $\mathcal{O}^{*}\phi$.
	\section{Applications}
In this section, we give some applications of our results. Let $T^{*}$ denote one of these operator $\mathcal{L}^{*}_{k,m}$,   $\mathcal{O}^{*} $ and ${\partial}^{J *}$. Suppose that  $\text{dim}(J_{k,m}^{cusp})= 0$, then for any $\phi \in J_{k+2,m}^{cusp}$, ~$T^{*} \phi \equiv 0$. This allows to get the relationship among the coefficients and special values of certain Dirichlet series.

For example, let us consider $k=8$, $m=1$ and $\phi = \phi_{10, 1}=\frac{1}{144}(E_6E_{4, 1}-E_4E_{6, 1})$. Let  $c_{10,1}(\ell,w)$ denote  $(\ell,w)$-coefficient of cusp form  $\phi_{10, 1}$. We have seen that    $\text{dim}(J_{8,1}^{cusp})= 0$. So, we get that $\mathcal{L}^{*}_{k,m} \phi_{10, 1} = 0$. This allows us to get the following relation among the coefficients:
\begin{equation*}
\begin{split}
   \quad c_{10,1}(\ell,w)& = \frac{60 (4 \ell-w)^{17/2}}{5/2-(4 \ell-w)}  \sum_{p=1}^{\infty}   ~ \frac{ \sigma_{1}(p) c_{10,1}(\ell+p,w) }{(4(\ell+p) -w^{2})^{\frac{17}{2}}}.\\
\end{split}
\end{equation*}
Similarly, we have $\mathcal{O}^{*} \phi_{10, 1} = 0$. This gives that 
 $$ A_{1} + A_{2} + A_{31} +A_{32} + A_{4} +A_{5} =0,   $$  
where $A_{i}$ are given in \thmref{ADOBD} with $c(\ell,w)$, replaced by $c_{10,1}(\ell,w)$.

Moreover, let  $\text{dim}(J_{k,m}^{cusp})= 1$, and let  it is generated by  $\psi(\tau,z)$.  Then for any $\phi \in J_{k+2,m}^{cusp}$, $T^{*} \phi = M_{\phi,\psi} \psi(\tau,z)$ for some constant $M_{\phi,\psi}$ . This allows to get the relationship between the coefficients of $\phi$ and $\psi$.

For example, let us consider $k=10$, $m=1$,   $\psi = \phi_{10, 1}=\frac{1}{144}(E_6E_{4, 1}-E_4E_{6, 1})$ and  $\phi=\phi_{12, 1}=\frac{1}{144}(E_4^2E_{4, 1}-E_6E_{6, 1})$.  Let $c_{10,1}(\ell,w)$ (resp. $c_{12,1}(\ell,w)$) denote the $(\ell,w)$-coefficient of $\phi_{10, 1}$ (resp. $\phi_{12, 1}$).  We know that  $\text{dim}(J_{10,1}^{cusp})= 1$ and it is generated by $\phi_{10, 1}$.
Hence, we have $\mathcal{L}^{*}_{k,m} \phi_{12, 1} = \alpha ~ \phi_{10, 1}  $ for some $\a \neq 0$. This allows us to get the following relation among the coefficients:
\begin{equation*}
\begin{split}
   \frac{(4 \ell-w)-19/2}{60 (4 \ell-w)^{21/2}} c_{12,1}(\ell,w)  + 76 \sum_{p=1}^{\infty}   ~ \frac{ \sigma_{1}(p) c_{12,1}(\ell+p,w) }{(4(\ell+p) -w^{2})^{\frac{17}{2}}}  = \frac{\alpha}{\beta_{10,1,\ell,w}} c_{10, 1}(\ell,w). 
 \end{split}
\end{equation*}
Similarly, we have $\mathcal{O}^{*} \phi_{12, 1} =  \beta  ~ \phi_{10, 1} $ for some $\b \neq 0$. This gives that
\begin{equation*}
\begin{split}
  A_{1} + A_{2} + A_{31} +A_{32} + A_{4} +A_{5}= \frac{\beta}{\beta_{10,1,\ell,w}}   c_{10, 1}(\ell,w)  
\end{split}
\end{equation*}
where $A_{i}$ are given in \thmref{ADOBD} with $c(\ell,w)$, replaced by $c_{12,1}(\ell,w)$.



\end{document}